\documentclass[11pt]{article}
 \usepackage{graphicx}
 \usepackage{amssymb}
 \usepackage{epstopdf}
 \usepackage{pifont}
 \usepackage{amsmath}
 \usepackage{enumerate}
 \usepackage[sans]{dsfont}
 \usepackage{graphicx}
 \usepackage{stmaryrd}
 \usepackage{amsthm}
 \usepackage{verbatim}
 \usepackage[all]{xypic}
 \usepackage[mathscr]{euscript}
 \usepackage{mystyle}
 \usepackage{verbatim}

 \usepackage{ccfonts}

 \textwidth = 6.1 in
 \textheight = 8.4 in
 \oddsidemargin = 0.0 in
 \evensidemargin = 0.0 in
 \topmargin = 0.3 in
 \headheight = 0.0 in
 \headsep = 0.0 in
 \parskip = 0.2in
 \parindent = 0.0in

 \newtheoremstyle{mythmsty}{15pt}{5pt}{\it}{0pt}{\textbf}{}{0pt}{\textbf{(\thmnumber{#2})} \thmname{#1} \thmnote{ #3}
}

 \theoremstyle{mythmsty}

 \newtheorem{thm}[paragraph]{\bf Theorem}
 \newtheorem{cor}[paragraph]{\bf Corollary}
 
 \newtheorem{prop}[paragraph]{\bf Proposition}
 \newtheorem{lemma}[paragraph]{\bf Lemma}

 \newenvironment{thm*}{\bf Theorem \it}{}
 \newenvironment{cor*}{\bf Corollary \it}{}
 \newenvironment{definition*}{\bf Definition \it}{}
 \newenvironment{prop*}{\bf Proposition \it}{}
 \newenvironment{lemma*}{\bf Lemma \it}{}
 \newenvironment{fact*}{\bf Fact \it}{}
 \newenvironment{rmk*}{\bf Remark \it}{}
 \newenvironment{exercise*}{\bf Exercise \it}{}
 \newenvironment{solution*}{\underline{Solution}\quad\rm}

  \numberwithin{equation}{subsubsection}

\newcommand{\mysec}[1]{\noindent \section{#1} \label{#1}}
\newcommand{\mysubsec}[1]{\noindent \subsection{#1} \label{#1}}
\newcommand{\ppp}[1]{\noindent\subsubsection{}\label{#1}\hskip -.1455in\textbf{)}}
\newcommand{\rrf}[1]{(\ref{#1})}

\setcounter{secnumdepth}{4}

\title{\bf  Derived equivalence and birationality}
\author{{\small \textsc{Yu-Han Liu}}}
\begin{document}
\date{}
\maketitle

\setcounter{tocdepth}{2}

\setcounter{section}{0}

\mysec{Introduction}

\mysubsec{Introduction}

\ppp{}  Two smooth projective varieties are called derived equivalent, or D-equivalent, if their (bounded) derived categories of coherent sheaves are exact equivalent.  Kawamata formulated conjectures relating D-equivalence with K-equivalence \cite{kawamata_dk}.  Some partial answers to these conjectures have been known; see \cite[Section 6.4]{huybrechts_fm} and \cite{kawamata_kd}. 

If two varieties are K-equivalent then they are birational, and there are examples of D-equivalent varieties which are not birational: most well-known among them perhaps abelian varieties and their duals of dimension at least two.  In this paper we prove a sufficient condition \rrf{thm:bir} when D-equivalence does imply birationality.  

The form \rrf{cor:bir} in which we will apply the criterion was already known, as the author discovered after having written up the paper.  The slight shift of emphasis from geometry to tensor functors will be justified only in future work.

\ppp{}  Since birationality is equivalent to isomophy for smooth projective curves, we use this birational criterion in \rrf{thm:curves} to show that over an algbraically closed field (of \emph{any} characteristic), D-equivalent curves are isomorphic.  The cases when the curve genus is zero or at least 2 can be reduced to a more general result due to Bondal-Orlov \cite[Theorem 2.5]{BO_reconstruction} about varieties with ample (anti-)canonical bundles; our approach gives also a proof of slightly different flavor for these cases. 

The most interesting case is of elliptic curves.  The proof presented in this paper has the advantage of being algebraic and does not use the complex topology.  It is perhaps also closer in spirit to the first known examples of D-equivalent varieties:  They should be considered as moduli spaces of each other, but for an elliptic curve all the ``interesting'' moduli spaces are essentially isomorphic to the curve itself; \cite[Theorem 7]{atiyah_ellipticvectorbundles} and \cite[Theorem 1]{tu_elliptic}.

\ppp{}  Kawamata showed \cite[Theorem 2.3]{kawamata_dk} that for smooth projective varieties of maximal Kodaira dimensional, D-equivalence implies K-equivalence.  In particular in this case D-equivalence implies birationality.  In \rrf{cor:DK} we prove this last implication as another application.  Our approach is to retrieve information on the rational maps associated to the linear system of pluri-canonical forms through the Serre functors on the derived categories.

\ppp{}  \emph{Notations.}  For any variety we denote by $T_X:=D(X)_\mathrm{parf}$ the triangulated category of perfect complexes.  When $X$ is smooth $T_X$ is isomorphic to the derived category $D^b(X)$ of coherent sheaves on $X$.  If $k$ is a field we have $T_k=D^b(\mathrm{Vect}_k)$.

All functors considered in this paper are derived, and, for example, we denote the derived pull-back $Rf^*$ (resp. derived push-forward $Rf_*$) of a morphism $f$ simply by $f^*$ (resp. $f_*$).

In the case when a $k$-linear triangulated category $T$ is $\mathrm{Hom}$-finite, for any pair of objects $a,b\in T$ we denote by $\mathrm{Hom}^\bullet(a,b)$ the object $\bigoplus_j\mathrm{Hom}_T(a,b[j])[-j]$ in $T_k$.  The $\mathrm{Hom}$-finite condition is satisfied, for example, when $k$ is a field and $T=T_X$ where $X$ is a smooth projective variety over $k$.

\mysec{A criterion for birationality}

\mysubsec{Varieties sharing a point}

\ppp{}  Let $X$ be a variety over a field $k$; denote by $\pi$ the structural morphism $X\rightarrow \mathrm{Spec}(k)$.  We will need the following 

\begin{lemma}\label{lem:supp}  If $p$ is an object in $T_X$ such that the functor $T_X\rightarrow T_k$ defined by $a\mapsto \pi_*(a\otimes p)$ is a tensor functor, then $p$ is supported at a closed $k$-point $x$ on $X$.


\end{lemma}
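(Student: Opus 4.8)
The plan is to wring two separate pieces of information out of the hypothesis that $F:=\pi_*(-\otimes p)$ is a tensor functor: the unit constraint, which controls the \emph{total} cohomology of $p$, and the invertibility of line bundles, which controls the \emph{dimension} of its support. First I would record that a tensor functor carries the unit $\mathcal O_X$ of $T_X$ to the unit $k$ of $T_k$, so that $\pi_*p=F(\mathcal O_X)\cong k$; in particular $\dim_k\mathrm H^*(X,p)=1$. Next, any line bundle $L$ is $\otimes$-invertible in $T_X$ with inverse $L^\vee$, so $F(L)\otimes_k F(L^\vee)\cong F(L\otimes L^\vee)=F(\mathcal O_X)\cong k$ shows $F(L)$ is $\otimes$-invertible in $T_k=D^b(\mathrm{Vect}_k)$. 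Since the invertible objects there are exactly the shifts $k[n]$, I get $\dim_k\mathrm H^*(X,L\otimes p)=1$ for \emph{every} line bundle $L$.

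The heart of the argument is then to deduce that $\mathrm{Supp}(p)$ is zero-dimensional. Fixing an ample $\mathcal O_X(1)$ and taking $L=\mathcal O_X(n)$, Serre vanishing collapses the hypercohomology spectral sequence for $n\gg0$, so that $\dim_k\mathrm H^*(X,p(n))=\sum_t\dim_k\Gamma\bigl(X,\mathcal H^t(p)(n)\bigr)$. Each summand is, for $n\gg0$, the Hilbert polynomial of the coherent sheaf $\mathcal H^t(p)$, of degree $\dim\mathrm{Supp}\,\mathcal H^t(p)$ and with \emph{positive} leading coefficient; hence the sum is a polynomial of degree $\dim\mathrm{Supp}(p)$ with no cancellation of leading terms. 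As the left-hand side equals $1$ for all $n$, this polynomial is constant, forcing $\dim\mathrm{Supp}(p)=0$, i.e. $\mathrm{Supp}(p)$ is a finite set of closed points.

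Finally I would collapse this finite set to a single $k$-rational point. Objects supported at distinct closed points admit no morphisms in any degree, so $p$ splits as $\bigoplus_i p_i$ with $p_i$ supported at a point $x_i$; each nonzero $p_i$ has nonzero $\pi_* p_i$ (its cohomology sheaves are nonzero finite-length sheaves, whose global sections are their stalks). The unit constraint $\dim_k\pi_*p=1$ then allows exactly one nonzero summand, so $\mathrm{Supp}(p)=\{x\}$ and $\pi_*p\cong\bigoplus_t\mathcal H^t(p)_x$ is one-dimensional over $k$. Thus a single stalk $M=\mathcal H^{t_0}(p)_x$ is nonzero with $\dim_k M=1$; by Nakayama $\mathfrak m_x M=0$, so $M$ is a $k(x)$-vector space and $[k(x):k]\cdot\dim_{k(x)}M=1$ gives $k(x)=k$. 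Hence $p$ is supported at the closed $k$-point $x$ (indeed $p\cong\mathcal O_x$ up to shift).

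The step I expect to be the main obstacle is the passage from ``$F$ monoidal'' to ``support zero-dimensional''. The subtlety is that the naive invariant, the Euler characteristic $\chi(X,p(n))$, can have its leading terms cancel for a complex and so fails to detect $\dim\mathrm{Supp}(p)$; the correct input, supplied by invertibility of the $F(L)$, is that the \emph{total} cohomological dimension $\dim_k\mathrm H^*(X,p(n))$ stays bounded over all twists, and it is precisely the positivity of the leading coefficients of Hilbert polynomials that rules out cancellation and makes the degree argument conclusive.
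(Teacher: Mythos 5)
Your proof is correct in the setting where the lemma is actually applied, but it takes a genuinely different route from the paper's. The paper's own proof is a short appeal to the classification theorem \cite[Theorem (3.3.4.1)]{liu_TTsp}: every tensor functor $T_X\rightarrow T_k$ is isomorphic to $x^*$ for some closed $k$-point $x$, after which a projection-formula argument ($\pi_*(k(y)\otimes p)\cong\pi_*y_*y^*(p)\neq 0$, against $x^*(k(y))=0$) excludes any second point of $\mathrm{supp}(p)$. You instead use only the weakest consequences of monoidality --- preservation of the unit and of invertible objects --- to get $\dim_k\mathrm{H}^*(X,L\otimes p)=1$ for \emph{every} line bundle $L$, and then Serre vanishing plus positivity of leading coefficients of Hilbert polynomials to force $\dim\mathrm{supp}(p)=0$, with Nakayama settling the residue field. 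This buys self-containedness (no reconstruction-type theorem) and in fact a stronger conclusion: $p$ is a shift of $k(x)$, not merely supported at $x$; your diagnosis of why total cohomological dimension, rather than Euler characteristic, is the right invariant (no cancellation of positive leading terms) is exactly the correct point. The cost is generality: your argument needs an ample $\mathcal O_X(1)$ and hence $X$ (quasi-)projective with $p$ of proper support, whereas the lemma is stated for an arbitrary variety and the cited theorem covers that case. Since the lemma is only invoked for $p=\mathcal E_y$ on the smooth projective $X$ in \rrf{thm:bir}, this restriction is harmless, but you should state it as a standing hypothesis. Two smaller remarks: the splitting $p\cong\bigoplus_i p_i$ over the finitely many support points is valid (Ext-vanishing between objects with disjoint supports, plus induction over truncation triangles), but it is avoidable --- once the support is finite, the degenerate hypercohomology spectral sequence already gives $\dim_k\pi_*p=\sum_t\dim_k\Gamma(X,\mathcal H^t(p))$, the sum of $k$-dimensions of all stalks, and the unit constraint then forces a single one-dimensional stalk; also note that your proof does not remove the paper's dependence on \cite{liu_TTsp}, since the proof of \rrf{thm:bir} invokes that theorem again for the identification $\mathrm{End}(q_x(\mathcal O_X))\cong\mathcal O_{X,x}$.
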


\begin{proof}  By \cite[Theorem (3.3.4.1)]{liu_TTsp}, the tensor functor $a\mapsto \pi_*(a\otimes p)$ is isomorphic to $a\mapsto x^*(a)$ for some closed $k$-point $x\in X$; since $x^*(k(x))$ is non-zero, the case of $a=k(x)$ implies that $x$ lies in the cohomology support $\mathrm{supp}(p)$ of $p$.  

Suppose there is another closed $k$-point $y\neq x$ in $\mathrm{supp}(p)$, then we have $x^*(k(y))=0$.  On the other hand, $\pi_*(k(y)\otimes p)\cong \pi_*y_*y^*(p)$ is non-zero, since $y^*(p)$ is non-zero and $\pi_*y_*: T_{k(y)}\rightarrow T_k$ is an equivalence.  This is a contradiction and we must have $\mathrm{supp}(p)=\{x\}$.    \end{proof}

\ppp{}  Let $X$ and $Y$ be two smooth projective varieties over a field $k$.  

\begin{thm}\label{thm:bir}  Suppose $\Phi: T_X\rightarrow T_Y$ is an exact equivalence, and $F: T_X\rightarrow T_k$ and $G: T_Y\rightarrow T_k$ are tensor functors such that the following diagram is commutative up to functor isomorphisms:

\centerline{\xymatrix{T_X \ar[rr]^-\Phi \ar[dr]_-F&& T_Y \ar[dl]^-G\\ & T_k. }}

Then $X$ and $Y$ are birational.   \end{thm}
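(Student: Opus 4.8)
The plan is to reconstruct the two tensor functors as closed points, rewrite $\Phi$ as a Fourier--Mukai transform, and read off a birational correspondence from the support of its kernel. First I would use the reconstruction theorem behind \rrf{lem:supp} (that is, \cite[Theorem (3.3.4.1)]{liu_TTsp}) to identify $F$ with the fibre functor $x^{*}$ of a closed $k$-rational point $x\in X$, and $G$ with the fibre functor $y^{*}$ of a closed $k$-rational point $y\in Y$; the residue fields must equal $k$ because both targets are $T_{k}$. The commutativity of the diagram then reads $y^{*}\circ\Phi\cong x^{*}$, equivalently $x^{*}\circ\Phi^{-1}\cong y^{*}$. By Orlov's representability theorem \cite[Chapter 5]{huybrechts_fm} I may write $\Phi\cong\Phi_{\mathcal{P}}$ for a kernel $\mathcal{P}\in D^{b}(X\times Y)$, with projections $p_{1}\colon X\times Y\to X$ and $p_{2}\colon X\times Y\to Y$; the inverse $\Phi^{-1}$ is again a Fourier--Mukai transform, with kernel $\mathcal{P}^{\vee}\otimes p_{2}^{*}\omega_{Y}[\dim Y]$.

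Next I would convert the two displayed identities into support statements using \rrf{lem:supp}. Flat base change along $\{y\}\hookrightarrow Y$ turns $a\mapsto(\Phi a)|_{y}$ into $a\mapsto\pi_{X*}\bigl(a\otimes\mathcal{P}|_{X\times\{y\}}\bigr)$; since this is isomorphic to the tensor functor $F$, \rrf{lem:supp} forces $\mathcal{P}|_{X\times\{y\}}$ to be the length-one skyscraper $k(x)$. Running the same argument for $\Phi^{-1}$ and the point $x$, the object $\Phi(k(x))=\mathcal{P}|_{\{x\}\times Y}$ is a shift and twist of the length-one skyscraper $k(y)$, so it too is supported at a single point, namely $y$. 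Thus both $p_{1}^{-1}(x)\cap Z$ and $p_{2}^{-1}(y)\cap Z$ equal the reduced point $(x,y)$, where $Z:=\mathrm{supp}(\mathcal{P})$.

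Finally I would extract the birational map from $Z$. Because $\Phi$ is an equivalence, $\Phi(k(x'))\neq0$ and $\Phi^{-1}(k(y'))\neq0$ for every closed point, so both projections $Z\to X$ and $Z\to Y$ are surjective. Properness together with the zero-dimensional fibre over $x$ shows that $p_{1}$ is finite near $(x,y)$, whence the component $W\subseteq Z$ through $(x,y)$ has dimension $\dim X=\dim Y$ and dominates both factors. The length-one computations above say that the generic fibres of $p_{1}|_{W}$ and $p_{2}|_{W}$ have length one, so each projection is birational; composing one with the inverse of the other produces a birational map $X\dashrightarrow Y$.

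The main obstacle is this last step: passing from the single matched pair $(x,y)$ to a \emph{global} birational correspondence. One must control the dimension and the degree of the component $W$ of $\mathrm{supp}(\mathcal{P})$ through $(x,y)$, and argue by upper semicontinuity that the length-one derived fibres over the special point force the two projections to be generically injective rather than merely generically finite. Keeping track of the scheme-theoretic, as opposed to merely set-theoretic, fibre structure is the delicate point, and is where the skyscraper conclusion of \rrf{lem:supp} does the essential work.
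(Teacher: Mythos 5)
Your first half tracks the paper's own argument closely: identifying $F$ and $G$ with fibre functors $x^*$, $y^*$ via \cite[Theorem (3.3.4.1)]{liu_TTsp}, writing $\Phi$ as a Fourier--Mukai transform, and using \rrf{lem:supp} together with the restriction-of-kernel lemma to conclude that $\mathrm{supp}(\mathcal E)$ meets $X\times\{y\}$ (and, by the dual argument, $\{x\}\times Y$) in exactly the point $(x,y)$. But the final step --- and it is the only step where birationality, as opposed to a set-theoretic correspondence, is actually produced --- has a genuine gap, which you yourself flag as ``the main obstacle'' without resolving it. Your claim that ``the generic fibres of $p_1|_W$ and $p_2|_W$ have length one'' does not follow from anything established: the length-one computation holds only at the special pair $(x,y)$, and no semicontinuity principle is cited (or available without further work, e.g.\ flatness of the kernel over a neighbourhood of $y$) that propagates it to nearby or generic fibres. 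Worse, even if you knew both projections were \emph{bijective} near $(x,y)$, this would still not give birationality over an arbitrary field: a proper bijective morphism can fail to be birational (the normalization of a cuspidal curve, or Frobenius in characteristic $p$ --- and the theorem is stated in arbitrary characteristic). So the passage from ``homeomorphism of supports'' to ``isomorphism of function fields'' is precisely what is missing. You also never invoke the connectedness of the fibres of $\mathrm{supp}(\mathcal E)\to Y$ (\cite[Lemma 3.9 and Lemma 6.11]{huybrechts_fm}), without which upper semicontinuity only gives finite, not single-point, fibres near $y$.

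The paper fills exactly this hole by a categorical, not geometric, argument: after trivializing $\Phi(\mathcal O_X)|_V\cong\mathcal O_V$ on a neighbourhood $V$ of $y$, it passes to the Verdier quotients $T_X/\ker(x^*)$ and $T_V/\ker(y^*)$, notes that the induced functor is fully faithful and sends the unit $q_x(\mathcal O_X)$ to the unit $q'_y(\mathcal O_V)$, and then uses \cite[Theorem (3.3.4.1) and Proposition (2.3.3.1)]{liu_TTsp} to identify the endomorphism rings of these unit objects with the local rings $\mathcal O_{X,x}$ and $\mathcal O_{Y,y}$. The resulting ring isomorphism $\mathcal O_{X,x}\cong\mathcal O_{Y,y}$ gives isomorphic fraction fields, i.e.\ $k(X)\cong k(Y)$, which is birationality --- with no appeal to generic behaviour of the projections at all. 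If you want to complete your geometric route instead, you would need to show (i) that the kernel restricted to $X\times V$ is a sheaf, flat over $V$, with all fibres $k(x')$, so that it is the graph of a morphism $f:V\to X$ (this is in the spirit of \cite[Lemma 2.5]{bm_complexsurfaces}), and (ii) that $f$ is \'etale, e.g.\ by comparing $\mathrm{Ext}^1(k(y'),k(y'))$ with $\mathrm{Ext}^1(k(f(y')),k(f(y')))$ through the equivalence, to rule out Frobenius-type bijections; neither of these appears in your proposal.
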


\begin{proof}  Again by \cite[Theorem (3.3.4.1)]{liu_TTsp} we may replace $F$ with $x^*$ and $G$ with $y^*$, where $x\in X$ and $y\in Y$ are closed $k$-points.  By \cite[Theorem 3.2.2]{orlov_dercat} the equivalence $\Phi$ is isomorphic to the Fourier-Mukai transformation $\Phi_\mathcal E$ for some object $\mathcal E\in T_{X\times Y}$; let $\mathcal E_y$ be the restriction of $\mathcal E$ to $X\times\{y\}\hookrightarrow X\times Y$.  Now we have the following diagram:

\centerline{\xymatrix{& X\times Y \ar[dl]_-{\pi_X} \ar[d]^-{\pi_Y} & X\times\{y\} \ar[l] \ar[d]^-\pi \\ X & Y & y. \ar[l]}}

Consider the functor $y^*\circ \Phi_{\mathcal E}$, which by \cite[Lemma 1.3]{bo_sod} is isomorphic to the Fourier-Mukai transformation $\Phi_{\mathcal E_y}: a\mapsto \pi_*(a\otimes \mathcal E_y)$.  By assumption this is isomorphic to the tensor functor $a\mapsto x^*(a)$, and so by \rrf{lem:supp} we have $\mathrm{supp}(\mathcal E_y)=\{x\}$ on $X\times\{y\}\cong X$.  By \cite[Lemma 3.3(b)]{thomason_class} we then have $\mathrm{supp}(\mathcal E)\cap (X\times\{y\})=\mathrm{supp}(\mathcal E_y)=\{x\}$.

Now the projection $\mathrm{supp}(\mathcal E)\rightarrow Y$ is surjective with connected fibres \cite[Lemma 6.4 and Lemma 6.11]{huybrechts_fm}, whose dimensions are upper-semicontinuous.  Hence there is an open neighbourhood $V$ of $y$ in $Y$ over which the morphism $\mathrm{supp}(\mathcal E)\rightarrow Y$ is a homeomorphism.  

In particular the support of $\Phi(\mathcal O_X)= \pi_{Y*}(\mathcal E)$ contains $V$; its fibre at $y$ is isomorphic to $k(y)$.  Therefore by shrinking $V$ if necessary, we have $\Phi(\mathcal O_X)|_V\cong \mathcal O_V$.

Now consider the following commutative diagram:

\centerline{\xymatrix{T_X \ar[rr]^-\Phi \ar[d]^-{q_x}  && T_Y \ar[d]^-{q_y} \ar[r]^-{|_V} & T_V \ar[d]^-{q'_y} \\ T_X/\ker(x^*) \ar[rr]^-{\bar\Phi} \ar[dr]_-{\bar x^*}&& T_Y/\ker(y^*) \ar[dl]^-{\bar y^*} \ar[r]_-{\mathrm{f.f.}} & T_V/\ker(y^*)  \\ & T_k, }}

where $\bar \Phi$ is the induced exact equivalence, and the arrow marked with ``f.f.'' is fully faithful.  We then have $\bar\Phi q_x(\mathcal O_X)=q_y\Phi(\mathcal O_X)$, whose image in $T_V/\ker(y^*)$ is the unit object $q'_y(\mathcal O_V)$.  

In particular, under the composition \[T_X/\ker(x^*)\longrightarrow T_Y/\ker(y^*)\longrightarrow T_V/\ker(y^*)\] the unit object $q_x(\mathcal O_X)$ is sent to the unit object $q'_y(\mathcal O_V)$.  This composition is fully faithful, and so we have an isomorphism \[\mathrm{End}_{T_X/\ker(x^*)}(q_x(\mathcal O_X))\cong \mathrm{End}_{T_V/\ker(y^*)}(q'_y(\mathcal O_V)).\]  But by \cite[Theorem (3.3.4.1) and Proposition (2.3.3.1)]{liu_TTsp} these two are isomorphic to respectively the local rings $\mathcal O_{X,x}$ and $\mathcal O_{V,y}\cong \mathcal O_{Y,y}$, and so $X$ and $Y$ are birational.  \end{proof}

\begin{cor}\label{cor:bir}\cite[Lemma 2.5]{bm_complexsurfaces}  Let $X$ and $Y$ be smooth projective varieties over a field $k$; let $x\in X$ and $y\in Y$ be closed $k$-points.  Suppose $\Phi: T_X\rightarrow T_Y$ is an exact equivalence sending $k(x)$ to a shift of $k(y)$, then $X$ and $Y$ are birational.
\end{cor}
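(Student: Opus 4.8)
The plan is to deduce this directly from Theorem \rrf{thm:bir}. I would take for $F$ and $G$ the natural fibre functors $F=x^*\colon T_X\to T_k$ and $G=y^*\colon T_Y\to T_k$. Both are tensor functors, since pullback along a morphism is symmetric monoidal. The only thing left to verify is that the triangle in Theorem \rrf{thm:bir} commutes, i.e. that $y^*\circ\Phi\cong x^*$ as functors $T_X\to T_k$; granting this, Theorem \rrf{thm:bir} applies and produces the birationality of $X$ and $Y$.

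Before checking commutativity I would absorb the shift. Writing $\Phi(k(x))\cong k(y)[n]$, I replace $\Phi$ by $\Phi[-n]$, which is again an exact equivalence and does not affect the conclusion; thus I may assume $\Phi(k(x))\cong k(y)$, and hence $\Phi^{-1}(k(y))\cong k(x)$. This normalization is essential rather than cosmetic: for $n\neq 0$ the composite $y^*\circ\Phi$ turns out to be isomorphic to $x^*[n]$, which sends the unit object $\mathcal O_X$ to $k[n]$ and so is \emph{not} a tensor functor, so the hypotheses of Theorem \rrf{thm:bir} would not be met on the nose.

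The key step is the identification $y^*\circ\Phi\cong x^*$, which I would carry out purely formally through adjoint functors, avoiding any analysis of Fourier--Mukai kernels. Since $i_x\colon\{x\}\hookrightarrow X$ is a closed immersion, $x^*=i_x^*$ is left adjoint to $x_*=i_{x*}$, and similarly $y^*$ is left adjoint to $y_*$. As $\Phi$ is an equivalence, its quasi-inverse $\Phi^{-1}$ is simultaneously a right adjoint, so the right adjoint of $y^*\circ\Phi$ is $\Phi^{-1}\circ y_*$. Now for any $V\in T_k=D^b(\mathrm{Vect}_k)$ the object $y_*(V)$ is $V\otimes_k k(y)$, a finite direct sum of shifts of $k(y)$; applying the $k$-linear exact functor $\Phi^{-1}$ and using $\Phi^{-1}(k(y))\cong k(x)$ yields $\Phi^{-1}y_*(V)\cong V\otimes_k k(x)=x_*(V)$, naturally in $V$. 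Thus $y^*\circ\Phi$ and $x^*$ have isomorphic right adjoints, and by uniqueness of adjoints they are themselves isomorphic.

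With the triangle commutative and $F=x^*$, $G=y^*$ both tensor functors, Theorem \rrf{thm:bir} gives that $X$ and $Y$ are birational. I expect the main obstacle to be precisely the identification $y^*\circ\Phi\cong x^*$: in a kernel-based approach one would instead have to show that the restriction $\mathcal E_y$ of an Orlov kernel to $X\times\{y\}$ is the structure sheaf $k(x)$ of a reduced point, which requires a local flatness/transversality analysis of $\mathcal E$ near $(x,y)$. Phrasing everything in terms of the fibre functors $x_*$ and $y_*$ collapses this into a one-line consequence of uniqueness of adjoints; the only point demanding care is the shift normalization of the second paragraph, without which the unit object is not preserved.
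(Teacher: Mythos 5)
Your proof is correct, and it reaches the reduction to Theorem \rrf{thm:bir} by a genuinely different verification of the key step than the paper does. Both you and the paper absorb the shift and then feed the tensor functors $x^*$ and $y^*$ into Theorem \rrf{thm:bir}; the difference lies in how the commutativity $y^*\circ\Phi\cong x^*$ is established. The paper never touches adjoints: it defines $F$ and $G$ as the corepresented functors $\mathrm{Hom}^\bullet(k(x)[-\dim X],-)$ and $\mathrm{Hom}^\bullet(k(y)[-\dim Y],-)$, for which commutativity of the triangle is immediate from full faithfulness of $\Phi$, and then identifies $F\cong x^*$, $G\cong y^*$ by Grothendieck--Verdier duality (hence its normalization $\Phi(k(x)[-\dim X])\cong k(y)[-\dim Y]$, with the dimension shifts exactly cancelling the $\omega[-\dim]$ in the duality). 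You instead pass to right adjoints: the right adjoint of $y^*\circ\Phi$ is $\Phi^{-1}\circ y_*$, which you compute to be $x_*$ using only $k$-linearity of $\Phi^{-1}$ and the fact that every object of $T_k$ is a finite direct sum of shifts of $k$ (semisimplicity of $T_k$ is what makes your isomorphism natural in $V$, a point worth stating explicitly), and you finish by uniqueness of adjoints. Your route avoids Serre/Verdier duality altogether at the cost of this soft categorical input; note also that your normalization $\Phi(k(x))\cong k(y)$ and the paper's differ by $[\dim X-\dim Y]$, which is harmless since D-equivalent smooth projective varieties have equal dimension, and neither argument actually needs that fact. One small correction to your closing remark: the kernel analysis you describe as the obstacle being avoided is not part of the paper's proof of this corollary either; it lives in the proof of Theorem \rrf{thm:bir}, which both your argument and the paper's invoke as a black box, so the two proofs are on equal footing there.
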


\begin{proof}  By pre- and post-composition with the shift functors on $T_X$ and $T_Y$, we may assume that $\Phi$ sends $k(x)[-\dim(X)]$ to $k(y)[-\dim(Y)]$.  Denote by $F: T_X\rightarrow T_k$ the functor represented by $k(x)[-\dim(X)]$, in the sense that $F(a):=\mathrm{Hom}^\bullet(k(x)[-\dim(X)],a)$ for every $a\in T_X$.  Similarly let $G$ be represented by $k(y)[-\dim(Y)]$, then we have $F\cong G\circ\Phi$.

But then we are in the situation of \rrf{thm:bir} since these functors are isomorphic to the tensor functors $x^*$ and $y^*$ by Grothendieck-Verdier duality \cite[Corollary 3.35]{huybrechts_fm}:  For any $a\in T_X$ we have \[\mathrm{Hom}^\bullet_{T_X}(k(x)[-\dim(X)],a)=\mathrm{Hom}^\bullet_{T_X}(x_*(k), a[\dim(X)])\cong \mathrm{Hom}^\bullet_{T_k}(k,x^*(a))\cong x^*(a)\in T_k.\]  \end{proof}

\begin{cor}\label{cor:bir2}  Let $X$ and $Y$ be smooth projective varieties over a field $k$; let $y\in Y$ be a closed $k$-point.  Suppose $\Phi: T_X\rightarrow T_Y$ is an exact equivalence sending an object $v$ to a shift of $k(y)$.  If the cohomology support $\mathrm{supp}(v)$ is zero-dimensional, then $X$ and $Y$ are birational.
\end{cor}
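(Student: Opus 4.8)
The plan is to reduce to Corollary \ref{cor:bir} by showing that the two hypotheses force $v$ to be a shift of a skyscraper sheaf $k(x)$ at a closed $k$-point $x$. Once this is established, Corollary \ref{cor:bir} applies directly: from $\Phi(v)$ a shift of $k(y)$ and $v\cong k(x)[j]$ it follows that $\Phi$ sends $k(x)$ to a shift of $k(y)$, and hence $X$ and $Y$ are birational.

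First I would reduce to the case in which $\mathrm{supp}(v)$ is a single point. Since $\Phi$ is an equivalence and $k(y)[m]$ is indecomposable (its endomorphism algebra $k$ is local), $v$ is indecomposable as well. Because $\mathrm{supp}(v)$ is zero-dimensional it is a finite set of closed points, and an object whose support is a disjoint union of closed subsets splits as the corresponding direct sum; indecomposability then forces $\mathrm{supp}(v)=\{x\}$ for a single closed point $x$.

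Next I would transport the self-$\mathrm{Ext}$ constraints from the target. As $\Phi$ is fully faithful, $\mathrm{Hom}^i_{T_X}(v,v)\cong\mathrm{Hom}^i_{T_Y}(k(y),k(y))=\mathrm{Ext}^i_{\mathcal O_Y}(k(y),k(y))$ for all $i$; in particular these groups vanish for $i<0$ and equal $k$ for $i=0$, using that $y$ is a $k$-point. The key step is to deduce from $\mathrm{Hom}^{<0}(v,v)=0$ that $v$ is concentrated in a single cohomological degree. Writing $[a,b]$ with $a\le b$ for the range of degrees in which the cohomology sheaves of $v$ are nonzero, I would exhibit a nonzero morphism $v\to v[a-b]$ as the composite of the canonical truncation map $v\to\mathcal H^b(v)[-b]$, a map $g[-b]\colon\mathcal H^b(v)[-b]\to\mathcal H^a(v)[-b]$ induced by a nonzero sheaf homomorphism $g\colon\mathcal H^b(v)\to\mathcal H^a(v)$, and the shift by $[a-b]$ of the canonical truncation map $\mathcal H^a(v)[-a]\to v$. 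Such a $g$ exists because $\mathcal H^a(v)$ and $\mathcal H^b(v)$ are nonzero sheaves supported at the single point $x$: compose a surjection $\mathcal H^b(v)\twoheadrightarrow k(x)$ with an inclusion $k(x)\hookrightarrow\mathcal H^a(v)$. Taking $\mathcal H^b$ of the composite recovers $g$, so the composite is nonzero; hence $\mathrm{Hom}^{a-b}(v,v)\ne0$, which together with $\mathrm{Hom}^{<0}(v,v)=0$ forces $a=b$. Thus $v\cong M[-a]$ for a single finite-length sheaf $M$ supported at $x$.

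Finally, from $\mathrm{Hom}^0(v,v)=k$ I obtain $\mathrm{End}_{\mathcal O_X}(M)=k$. A finite-length $\mathcal O_{X,x}$-module of length at least two admits a nonzero non-invertible endomorphism, namely a surjection onto its residue field followed by the inclusion of its socle; so $\mathrm{End}(M)=k$ forces $M$ to have length one and residue field $k$, that is $M\cong k(x)$ with $x$ a closed $k$-point. Hence $v\cong k(x)[-a]$ and Corollary \ref{cor:bir} completes the proof. I expect the single-degree concentration to be the main obstacle, since it is the one place where the purely homological input (vanishing of negative self-$\mathrm{Ext}$) must be converted into structural information about the complex $v$ itself; the remaining steps are the standard decomposition of an object along the connected components of its support and the elementary classification of point-supported sheaves with trivial endomorphism algebra.
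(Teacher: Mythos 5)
Your proof is correct, and its overall strategy coincides with the paper's: transport $\mathrm{Hom}(v,v)\cong k$ and the vanishing of $\mathrm{Hom}(v,v[i])$ for $i<0$ through the equivalence, conclude that $v$ is a shift of $k(x)$ for a closed $k$-point $x$, and then invoke Corollary \rrf{cor:bir}. The difference is in how that middle step is established. The paper simply cites \cite[Lemma 4.5]{huybrechts_fm}, which is precisely the statement that an object with zero-dimensional support, one-dimensional endomorphism algebra, and no negative self-$\mathrm{Hom}$s is a shifted skyscraper; you instead prove this characterization from scratch. Your argument is a clean and complete substitute: the splitting of $v$ along the connected components of its support plus indecomposability (inherited from $k(y)$ through $\Phi$) localizes the support at one point; the truncation-sandwich $v\to\mathcal H^b(v)[-b]\xrightarrow{g[-b]}\mathcal H^a(v)[-b]\to v[a-b]$, with $g$ built from a surjection onto and an injection of the residue field, produces a nonzero negative self-$\mathrm{Hom}$ unless $a=b$ (and your verification that applying $\mathcal H^b$ recovers $g$ correctly certifies nonvanishing); and the socle argument rules out length $\geq 2$ and non-trivial residue fields simultaneously, since $\mathrm{End}(M)=k$ forces $k(x)=k$. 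What your route buys is self-containedness and transparency about exactly which hypotheses are used where (zero-dimensionality for the splitting and for the existence of $g$; negative vanishing for concentration in one degree; $\mathrm{Hom}(v,v)=k$ for length one and rationality of the point); what the paper's citation buys is brevity, and the cited lemma is the standard reference result your argument essentially reproves in the case at hand.
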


\begin{proof}  Suppose $\Phi(v)\cong k(y)[n]$, then we have \[\mathrm{Hom}(v,v)\cong \mathrm{Hom}(k(y), k(y))\cong k.\]  Moreover, \[\mathrm{Hom}(v,v[i])\cong \mathrm{Hom}(k(y), k(y)[i])=0\]for all $i<0$.  See for example \cite[Proposition 11.8]{huybrechts_fm}.

Thus the conditions of \cite[Lemma 4.5]{huybrechts_fm} are satisfied, and we conclude that $v$ is isomorphic to a shift of $k(x)$ for some closed $k$-point $x\in X$, and we are in the situation of \rrf{cor:bir}.   \end{proof}

\mysec{Applications}

\mysubsec{Smooth projective curves with equivalent derived categories}

\ppp{}  We consider the following

\begin{thm}\label{thm:curves}  Let $k$ be an algebraically closed field.  If $C_1$ and $C_2$ are two smooth projective curves admitting an exact equivalence $\Phi:T_{C_1}\longrightarrow T_{C_2}$, then $C_1$ and $C_2$ are isomorphic.  \end{thm}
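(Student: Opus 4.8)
The plan is to invoke the birationality criterion \rrf{cor:bir}, since for smooth projective curves birationality coincides with isomorphy. Concretely, it will suffice to exhibit closed points $x\in C_1$, $y\in C_2$ and an exact equivalence carrying $k(x)$ to a shift of $k(y)$; here I am free to replace the given $\Phi$ by its composition with any auto-equivalence of $T_{C_2}$, as this leaves $C_2$ unchanged. I would organize the proof according to the genus $g$, first checking that $g$ is a derived invariant, and then treating the ranges $g=0$, $g\ge 2$ and $g=1$ in turn, the last being the essential one.

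That $g$ is preserved I would read off from the Euler pairing $\chi(a,b)=\sum_j(-1)^j\dim_k\mathrm{Hom}(a,b[j])$, which is intrinsic to the category and hence transported by $\Phi$ to an isometry of numerical Grothendieck groups. For a smooth projective curve one has $K_{\mathrm{num}}\cong\mathbb Z^2$ via $(\mathrm{rank},\deg)$, and Riemann--Roch (valid in any characteristic) gives, in the basis $([\mathcal O_C],[k(x)])$, a symmetric part represented by $\mathrm{diag}\bigl(2(1-g),\,0\bigr)$. The isometry class over $\mathbb Z$ of this symmetric form recovers the integer $2(1-g)$, so $g(C_1)=g(C_2)$.

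For $g=0$ there is nothing categorical to do: over an algebraically closed field the only genus-zero smooth projective curve is $\mathbb P^1$. For $g\ge 2$ the canonical bundle is ample, and I would recover the skyscrapers intrinsically as the \emph{point-like objects}, i.e.\ the objects $P$ with $S(P)\cong P[1]$, $\mathrm{Hom}(P,P)=k$, and $\mathrm{Hom}(P,P[i])=0$ for $i<0$; ampleness of $\omega$ (as in Bondal--Orlov) forces these to be shifts of the $k(x)$. Since the Serre functor $S$ is preserved by $\Phi$, the image $\Phi(k(x))$ is again point-like, hence a shift of some $k(y)$, and \rrf{cor:bir} applies. This reproves the Bondal--Orlov conclusion in these ranges with the present flavor.

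The main obstacle is the elliptic case $g=1$, where $\omega\cong\mathcal O$ and $S\cong[1]$, so that \emph{every} stable sheaf, not just the skyscrapers, is point-like; thus $\Phi(k(x))$ need only be a shift of a stable sheaf, and for the classical Poincaré transform it is in fact a degree-zero line bundle, of one-dimensional support, to which neither \rrf{cor:bir} nor \rrf{cor:bir2} applies. To circumvent this I would pass to numerical classes: $\Phi$ induces an isometry of $(K_{\mathrm{num}},\chi)\cong(\mathbb Z^2,\det)$, i.e.\ an element of $SL_2(\mathbb Z)$ up to sign, and the image of the auto-equivalence group of $T_{C_2}$ (generated by shifts, twists by line bundles, and the Fourier--Mukai transform, the latter available since $C_2\cong\mathrm{Pic}^0(C_2)$ over an algebraically closed field) surjects onto this $SL_2(\mathbb Z)$. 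Hence, after composing $\Phi$ with a suitable auto-equivalence of $T_{C_2}$, I may assume the skyscraper class $(0,1)$ is sent to $(0,\pm1)$. Then $\Phi(k(x))$ is a point-like object of numerical class $(0,\pm1)$, and by the Atiyah--Tu description of stable sheaves on an elliptic curve such an object is a shift of a rank-zero degree-one stable sheaf, i.e.\ of a skyscraper $k(y)$, so \rrf{cor:bir} concludes. The delicate points to verify will be the surjectivity onto $SL_2(\mathbb Z)$ and the fact that point-like objects of a primitive class are exactly the shifts of the stable sheaves in that class.
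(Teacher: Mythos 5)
Your proposal is correct, and outside genus one it coincides in substance with the paper: your point-like-object argument for $g\neq 1$ (using $S(P)\cong P[1]$, $\mathrm{End}(P)=k$, and ampleness of $\pm\omega$) is exactly the Bondal--Orlov mechanism that the paper cites from \cite{BO_reconstruction} and re-sketches in \rrf{par:rmk}; your preliminary step that $g$ is a derived invariant (via the symmetric part of $\chi$ on $K_{\mathrm{num}}$) is sound but strictly avoidable, since whenever either curve has genus $\neq 1$ one can apply Bondal--Orlov to that curve and conclude outright. The genuine divergence is in the genus-one case. The paper fixes $v=\Phi^{-1}(k(y)[-1])$, shows it is a shift of a simple sheaf, and runs an induction on its rank: Atiyah's classification rules out indecomposable bundles of degree zero (their endomorphism algebras have dimension $r\geq 2$, contradicting simplicity), and for $0<d_B<r$ a twisted Poincar\'e kernel together with cohomology-and-base-change replaces $v$ by a bundle of rank $d_B<r$ on $\mathrm{Pic}^0(E_1)\cong E_1$, the base cases $r=0,1$ going through \rrf{cor:bir}. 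You instead normalize the numerical class: $\Phi$ induces an isometry of $(K_{\mathrm{num}},\chi)\cong(\mathbb Z^2,\det)$; the shift, degree-one twists, and the Poincar\'e transform on $T_{C_2}$ act on $(\mathrm{rank},\deg)$ by $-I$, $\left(\begin{smallmatrix}1&0\\1&1\end{smallmatrix}\right)$, and $\left(\begin{smallmatrix}0&1\\-1&0\end{smallmatrix}\right)$, which generate $SL_2(\mathbb Z)$; since $SL_2(\mathbb Z)$ acts transitively on primitive vectors, after composing $\Phi$ with an autoequivalence you may assume $[\Phi(k(x))]=(0,\pm1)$, and then formality of complexes on a smooth curve plus $\mathrm{End}=k$ forces $\Phi(k(x))$ to be a shift of a rank-zero sheaf of degree one, i.e.\ of some $k(y)$, so \rrf{cor:bir} applies. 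At bottom both arguments are the Euclidean algorithm on the pair $(r,d)$: the paper runs it on actual sheaves, staying with the fixed $\Phi$ and modifying the source side by explicit kernels, while you run it in $K_{\mathrm{num}}$ on the target side and identify the object only at the end. What the paper's route buys is that it needs no statement about the autoequivalence group, only Atiyah's results on the Poincar\'e family; what yours buys is uniform treatment of all ranks and near-total avoidance of Atiyah's classification (for the final identification no stability theory is needed: a torsion sheaf of degree one is a skyscraper), at the price of the two claims you flag, both of which are true and verifiable by computations of roughly the same weight as the paper's (the matrices above generate $SL_2(\mathbb Z)$, and equivalences preserve point-likeness and primitivity of classes).
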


In fact when the genus $g$ of $C_1$ is either zero or at least two the result holds over any field due to the more general result \cite[Theorem 2.5]{BO_reconstruction}; see also \cite[Theorem (4.2.1.1)]{liu_TTsp} for a somewhat different proof.  (In \rrf{par:rmk} below we sketch an independent proof in the case of curves.)

We are thus reduced to the special case:

\begin{prop}\label{prop:ell}  Let $k$ be an algebraically closed field.  If $E_1$ and $E_2$ are two smooth projective curves of genus one admitting an exact equivalence $\Phi:T_{E_1}\longrightarrow T_{E_2}$, then $E_1$ and $E_2$ are isomorphic.  \end{prop}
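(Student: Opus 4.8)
The plan is to deduce the isomorphism from \rrf{cor:bir}: I would produce, possibly after composing $\Phi$ with a suitable autoequivalence of $T_{E_2}$, a closed point $x\in E_1$ whose skyscraper is sent to a shift of a skyscraper on $E_2$. Since birational smooth projective curves are isomorphic, this suffices. Because $k$ is algebraically closed every closed point is a $k$-point, so after fixing origins I regard $E_1$ and $E_2$ as elliptic curves; in particular $\omega_{E_i}\cong\mathcal O_{E_i}$.

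First I would extract the numerical constraint. On a genus-one curve Riemann--Roch gives $\chi(a,b)=\mathrm{rk}(a)\deg(b)-\mathrm{rk}(b)\deg(a)$, a unimodular \emph{symplectic} form on the numerical Grothendieck group $N(E_i)\cong\mathbb Z^2$ with coordinates $(\mathrm{rk},\deg)$. Any exact equivalence preserves $\chi$, so after identifying both lattices with the standard symplectic $\mathbb Z^2$ the functor $\Phi$ induces an isometry $M\in SL_2(\mathbb Z)$. Since $[k(x)]=(0,1)$, the image class $M\binom{0}{1}$ is primitive.

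Next I would identify $\Phi(k(x))$ up to shift. Because a smooth curve has homological dimension one, $\mathrm{Coh}(E_2)$ is hereditary and every object of $T_{E_2}$ is the direct sum of shifts of its cohomology sheaves; as $\mathrm{End}(\Phi k(x))\cong\mathrm{End}(k(x))=k$, this splitting forces $\Phi(k(x))$ to be a shift of a single simple sheaf. To straighten its numerical class I would use two autoequivalences of $T_{E_2}$: tensoring by a line bundle of degree one, and the Poincar\'e Fourier--Mukai transform $\mathsf S\colon T_{E_2}\xrightarrow{\sim}T_{\widehat{E_2}}\cong T_{E_2}$. By Mukai's computations $\mathsf S(k(y))$ is a degree-zero line bundle and $\mathsf S(\mathcal O_{E_2})\cong k(\widehat 0)[-1]$, so on $N(E_2)$ these act by $\begin{pmatrix}1&0\\1&1\end{pmatrix}$ and $\begin{pmatrix}0&1\\-1&0\end{pmatrix}$, which generate $SL_2(\mathbb Z)$. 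As $SL_2(\mathbb Z)$ is transitive on primitive vectors I may choose a composite $\Psi$ with $\Psi_*M\binom{0}{1}=\binom{0}{1}$; then $\Psi\circ\Phi$ is again an exact equivalence sending $k(x)$ to a simple object of class $(0,1)$, which by the same hereditary splitting is a shift of a simple rank-zero sheaf, i.e. a skyscraper $k(y)[n]$. Applying \rrf{cor:bir} to $\Psi\circ\Phi$ then yields the birationality, and hence the isomorphism, of $E_1$ and $E_2$.

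The step I expect to be the main obstacle is the geometric input underlying this correction: verifying that the Poincar\'e transform is an equivalence with the stated effect on $N(E_2)$, together with the structural facts that on an elliptic curve every object is formal and that a simple sheaf of rank zero is a length-one skyscraper. These are precisely where Atiyah's classification enters, and --- as the paper emphasizes --- they are purely algebraic, so the argument is uniform in the characteristic and uses no complex topology; conceptually the transitive $SL_2(\mathbb Z)$-action is the homological shadow of the Atiyah--Tu principle that the relevant moduli spaces are again copies of $E_2$.
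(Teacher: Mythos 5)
Your proposal is correct, but it takes a genuinely different route from the paper's. The paper works on the $E_1$ side: it sets $v=\Phi^{-1}(k(y)[-1])$, shows $v$ is a shift of a skyscraper or of a vector bundle, and runs an induction on $\mathrm{rank}(v)$, invoking Atiyah's classification twice --- once to rule out the degree-zero case via $\mathrm{Hom}(F_r,F_r)\cong k^r$, and once (his Lemma 15 plus Riemann--Roch) to show that the Poincar\'e transform turns $v$ into a bundle of strictly smaller rank $d_B$, with $0\leq d_B<r$ --- until it reaches the base cases of rank $0$ or $1$, each fed into \rrf{cor:bir}. You instead work on the $E_2$ side and replace the sheaf-level induction by a lattice computation: the equivalence induces an isometry of the numerical Grothendieck groups $N(E_i)\cong\bigl(\mathbb Z^2,\chi\bigr)$, the degree-one twist and the Mukai transform induce $\left(\begin{smallmatrix}1&0\\1&1\end{smallmatrix}\right)$ and $\left(\begin{smallmatrix}0&1\\-1&0\end{smallmatrix}\right)$, which generate $SL_2(\mathbb Z)$ and hence act transitively on primitive vectors, so $\Phi$ can be corrected until $[\Phi(k(x))]=(0,1)$; hereditarity plus $\mathrm{End}=k$ plus the fact that a rank-zero sheaf of degree one has length one then puts you in the situation of \rrf{cor:bir}. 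The two mechanisms are secretly the same --- the paper's rank reduction $r\mapsto d_B$ is exactly the Euclidean algorithm that proves transitivity of $SL_2(\mathbb Z)$ on primitive vectors --- but your packaging avoids Atiyah's classification entirely: the only nontrivial inputs are Mukai's theorem that $\Phi_{\mathcal P}$ is an equivalence (valid over any algebraically closed field, so the argument stays purely algebraic, as required) and its effect on $\mathcal O_{E_2}$ and on skyscrapers, which the paper also uses. One small correction to your closing remark: none of the structural facts you list there actually depends on Atiyah's classification --- formality of objects is hereditarity of $\mathrm{Coh}$ of a smooth curve, the skyscraper identification is length counting via Riemann--Roch, and the transform is Mukai's --- and this independence is precisely why your route is shorter than the paper's.
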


The case when $k=\mathbb C$ can be proved by passing to singular cohomology \cite[Corollary 5.46]{huybrechts_fm}, but this proof does not seem to trivially generalize to other fields.  It may be an interesting question to ask whether \rrf{prop:ell} still holds over fields which are not necessarily algebraically closed.

The rest of this section except for \rrf{par:rmk} is devoted to giving the proof of \rrf{prop:ell}, and we work under its notations and assumptions.  

\ppp{}  Let $v$ be an object in $T_{E_1}$ such that $\Phi(v)\cong k(y)[-1]$ for a closed $k$-point $y\in E_2$.  Recall that $k(y)[-1]$ represents the functor $y^*: T_{E_2}\rightarrow T_{k(y)}$; see the proof of \rrf{cor:bir}.  Let $F: T_{E_1}\rightarrow T_k$ be the exact functor represented by $v$: \[F(b):=\mathrm{Hom}^\bullet_{T_{E_1}}(v,b)\in T_k\] for every $b\in T_{E_1}$.  Then for any $b\in T_{E_1}$ we have \[F(b):=\mathrm{Hom}^\bullet_{T_{E_1}}(v,b)\cong \mathrm{Hom}^\bullet_{T_{E_2}}(\Phi(v),\Phi(b))=y^*\Phi(b).\]

Hence we have a commutative diagram

\centerline{\xymatrix{T_{E_1} \ar[rr]^-\Phi \ar[dr]_-F && T_{E_2} \ar[dl]^-{y^*}\\ & T_k. }}

\begin{prop}\label{prop:obs}  With notations as above, we have:\begin{enumerate}[(a)]
\item $\mathrm{Hom}(v,v)=k$.
\item The objects $v$ is isomorphic either to a shift of $k(x)$ for some point $x\in E_1$ or to a shift of a vector bundle on $E_1$.  
\end{enumerate}
\end{prop}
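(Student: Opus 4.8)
The plan is to obtain (a) directly from the equivalence, and then to deduce the dichotomy in (b) by combining (a) with the fact that $T_{E_1}\cong D^b(E_1)$ is governed by a hereditary abelian category.

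For (a), since $\Phi$ is an equivalence and $\Phi(v)\cong k(y)[-1]$, I would compute
\[\mathrm{Hom}_{T_{E_1}}(v,v)\cong \mathrm{Hom}_{T_{E_2}}(k(y)[-1],k(y)[-1])=\mathrm{Hom}_{T_{E_2}}(k(y),k(y)).\]
Because $k$ is algebraically closed and $y$ is a closed $k$-point, the skyscraper sheaf $k(y)$ has residue field $k$ and endomorphism ring $k$, which gives (a). The same computation after a shift yields $\mathrm{Hom}(v,v[i])\cong \mathrm{Ext}^i_{E_2}(k(y),k(y))$, which vanishes for $i<0$; I would record this, since it is exactly the hypothesis needed to later invoke \rrf{cor:bir} once (b) is in hand.

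For (b), the first step is to use that $\mathrm{Coh}(E_1)$ has homological dimension one, so that every object of $T_{E_1}$ is (non-canonically) isomorphic to the direct sum of the shifts of its cohomology sheaves, $v\cong \bigoplus_i H^i(v)[-i]$; the obstruction to splitting off the top cohomology lives in a second $\mathrm{Ext}$-group between coherent sheaves, which is zero on a curve. Given such a splitting, the endomorphism algebra $\mathrm{Hom}(v,v)$ contains $\bigoplus_i \mathrm{Hom}_{\mathcal O_{E_1}}(H^i(v),H^i(v))$ as a direct summand, and every nonzero coherent sheaf contributes at least the one-dimensional space of scalars. Hence part (a) forces exactly one cohomology sheaf $\mathcal F:=H^{i_0}(v)$ to be nonzero, with $\mathrm{End}(\mathcal F)=k$, so that $v\cong \mathcal F[-i_0]$ with $\mathcal F$ simple.

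It then remains to analyse a simple coherent sheaf on the smooth curve $E_1$. Its torsion subsheaf $T\subseteq \mathcal F$ fits in a short exact sequence with torsion-free, hence locally free, quotient $\mathcal F/T$; since $\mathrm{Ext}^1(\mathcal F/T, T)=H^1(E_1,(\mathcal F/T)^\vee\otimes T)=0$ because $T$ is torsion on a curve, the sequence splits and $\mathcal F\cong T\oplus(\mathcal F/T)$. Simplicity of $\mathcal F$ forces one summand to vanish: if $\mathcal F$ is locally free we are in the vector-bundle case, while if $\mathcal F$ is torsion then simplicity forces it to be supported at a single closed point $x$ (otherwise it would decompose according to its support), and the structure theorem for finite-length modules over the discrete valuation ring $\mathcal O_{E_1,x}$ together with $\mathrm{End}(\mathcal F)=k$ forces $\mathcal F\cong k(x)$. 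This establishes the dichotomy. The routine inputs (endomorphisms of a skyscraper, splitting of the torsion subsheaf, and module theory over a DVR) are elementary and characteristic-free; the one step I would justify most carefully is the formality isomorphism $v\cong\bigoplus_i H^i(v)[-i]$, as it is precisely what converts the numerical identity $\mathrm{Hom}(v,v)=k$ into the statement that $v$ is a shift of a single simple sheaf, and everything downstream rests on it.
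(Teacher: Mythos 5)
Your proof is correct and follows essentially the same route as the paper: compute $\mathrm{Hom}(v,v)\cong\mathrm{Hom}(k(y),k(y))\cong k$ via the equivalence, split $v$ into shifts of its cohomology sheaves (the paper cites Huybrechts, Cor.\ 3.15, where you re-derive it from heredity), use (a) to force a single summand, and then use the torsion/locally-free splitting on a smooth curve plus simplicity to get the dichotomy, with the torsion case collapsing to $k(x)$. The only difference is that you spell out steps the paper handles by citation or by a one-line remark (the $\mathrm{Ext}^1$ vanishing for the torsion splitting, the DVR structure theory in the torsion case), which is fine but not a different argument.
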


\begin{proof}  (a)  It follows from the fact that $\mathrm{Hom}(v,v)\cong \mathrm{Hom}(k(y), k(y))\cong k$ since $v\mapsto k(y)[-1]$ under the equivalence $\Phi$.

(b)  By \cite[Corollary 3.15]{huybrechts_fm} any object in $T_{E_1}$ is isomorphic to a direct sum of shifts of coherent sheaves on $E_1$.  By part (a) there must be only one summand, hence $u$ is the shift of a coherent sheaf.  Since any coherent sheaf on a smooth curve is also the direct sum of its torsion subsheaf with a locally free sheaf, we see that $v$ is the shift of either a torsion sheaf supported at a closed $k$-point or a vector bundle.  In the torsion case it must also be reduced since otherwise $\mathrm{Hom}(v,v)$ would have dimension greater than one.  \end{proof}

\ppp{}  The proof of \rrf{prop:ell} will proceed by induction on the rank of (the shift of) the coherent sheaf $v$; by this we mean the rank of the unique (up to isomorphism) coherent sheaf isomorphic to $v$ in $T_{E_1}$.  The base cases are the following:

\begin{lemma}\label{lem:reduction}  If the rank of $v$ is $0$ or $1$ then $E_1$ and $E_2$ are isomorphic.  \end{lemma}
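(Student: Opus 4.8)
The plan is to prove both base cases by reducing them to the birationality criterion \rrf{cor:bir}, together with the classical fact (recalled in the introduction) that two birational smooth projective curves are isomorphic. The case $\mathrm{rank}(v)=0$ is essentially immediate: by \rrf{prop:obs}(b) a rank-zero object $v$ is isomorphic to a shift $k(x)[m]$ of a reduced skyscraper at a closed point $x\in E_1$. Since $\Phi(v)\cong k(y)[-1]$, we obtain $\Phi(k(x))\cong k(y)[-1-m]$, i.e.\ $\Phi$ sends the skyscraper $k(x)$ to a shift of the skyscraper $k(y)$. Then \rrf{cor:bir} gives that $E_1$ and $E_2$ are birational, hence isomorphic.

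The rank-one case is the one requiring a genuinely new idea. Here $v$ is, by \rrf{prop:obs}(b), a shift of a line bundle $L$ on $E_1$, and I would exploit that over the algebraically closed field $k$ a genus-one curve carries a $k$-point, so $E_1$ is an elliptic curve and is isomorphic to its dual $\hat E_1$. The goal is to produce an \emph{equivalence} $\Theta\colon T_{E_1}\to T_{\hat E_1}$ carrying $v$ to a shift of a skyscraper sheaf, thereby reducing this case to the rank-zero case already treated. Concretely, set $\Theta:=\Phi_{\mathcal P}\circ(-\otimes N)$, where $N$ is a line bundle on $E_1$ of degree $-\deg L$ (such $N$ exists, since $\mathrm{Pic}^{-\deg L}(E_1)\neq\emptyset$) and $\Phi_{\mathcal P}$ is the Fourier–Mukai transform attached to the Poincaré bundle on $E_1\times\hat E_1$. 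Then $L\otimes N$ is a degree-zero line bundle, and the Fourier–Mukai transform of an elliptic curve carries degree-zero line bundles to shifts of skyscraper sheaves \cite{huybrechts_fm}; indeed, on the lattice of pairs $(\mathrm{rank},\deg)$ the twists $-\otimes N$ and $\Phi_{\mathcal P}$ realize the generators $(r,d)\mapsto(r,d+r)$ and $(r,d)\mapsto(-d,r)$ of $\mathrm{SL}_2(\mathbb Z)$, so the primitive class $(1,\deg L)$ can always be moved to $(0,1)$.

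With $\Theta$ in hand the argument closes quickly. The composite $\Phi\circ\Theta^{-1}\colon T_{\hat E_1}\to T_{E_2}$ is an exact equivalence sending $\Theta(v)$ to $\Phi(v)\cong k(y)[-1]$. Since $\Theta$ is an equivalence it preserves $\mathrm{Hom}(v,v)\cong k$, so $\Theta(v)$ is a rank-zero object with one-dimensional endomorphism ring; the argument of \rrf{prop:obs}(b) then identifies it with a shift of a reduced skyscraper $k(\hat x)$ on $\hat E_1$. Applying the rank-zero case to $\Phi\circ\Theta^{-1}$ shows that $\hat E_1$ and $E_2$ are birational, hence isomorphic, and combining this with $E_1\cong\hat E_1$ yields $E_1\cong E_2$. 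The main obstacle is exactly the construction in the second paragraph: everything hinges on exhibiting a rank-lowering equivalence, which is the precise point at which the special geometry of elliptic curves — their self-duality and the behaviour of the Fourier–Mukai transform on rank and degree — must be invoked; by contrast the rank-zero reduction and the final step from birational to isomorphic curves are routine.
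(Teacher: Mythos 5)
Your proposal is correct and is essentially the paper's own argument: the rank-zero case goes through \rrf{prop:obs} and \rrf{cor:bir} verbatim, and the rank-one case uses the same rank-lowering device, namely a twist by a line bundle killing the degree composed with the Poincar\'e-bundle Fourier--Mukai transform, which exchanges degree-zero line bundles and skyscrapers. The only (immaterial) difference is directional: the paper transports a skyscraper $k(m)$ on $\mathrm{Pic}^0(E_1)$ forward to $v$ via $\Phi_{\mathcal E}\circ\Phi_{\mathcal P}$ and applies \rrf{cor:bir} to the composite $\Phi\circ\Phi_{\mathcal E}\circ\Phi_{\mathcal P}$, whereas you transport $v$ to a skyscraper on $\hat E_1$ via $\Theta$ and apply \rrf{cor:bir} to $\Phi\circ\Theta^{-1}$.
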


\begin{proof}  By \rrf{prop:obs}, if $v$ is torsion then it is a shift of $k(x)$.  Then $E_1$ and $E_2$ are birational by \rrf{cor:bir}, hence isomorphic since they are assumed to be both smooth.

On the other hand, assume that $v$ is a shift of a line bundle, then $v\cong L[n]$ for some $n\in\mathbb Z$ and line bundle $L$ on $E_1$.  Fix a closed $k$-point $z\in E_1$, consider exact equivalences \[T_{\mathrm{Pic}^0(E_1)}\stackrel{\Phi_{\mathcal P}}{\longrightarrow} T_{E_1} \stackrel{\Phi_\mathcal E}{\longrightarrow} T_{E_1},\]where $\mathcal P$ is the Poincar\'e line bundle on $\mathrm{Pic}^0(E_1)\times E_1$, and $\mathcal E=\Delta_*\mathcal O_{E_1}(\deg(L)z)[n]$ on $E_1\times E_1$; here $\Delta: E_1\longrightarrow E_1\times E_1$ denotes the diagonal morphism.

With our choice of Fourier-Mukai kernel $\mathcal E$ we have a line bundle $M$ of degree zero on $E_1$ such that $\Phi_\mathcal E(M)=v$; moreover, there is a closed $k$-point $m\in \mathrm{Pic}^0(E_1)$ such that $\Phi_\mathcal P(k(m))=M$.  Hence the composition $\Phi\circ \Phi_\mathcal E\circ\Phi_\mathcal P$ is an exact equivalence sending $k(x)$ to $k(y)[-1]$, and we are in the situation treated above:

\centerline{\xymatrix{ T_{\mathrm{Pic}^0(E_1)}\ar[r]^-{\Phi_\mathcal P} \ar[rrd]_-{}& T_{E_1} \ar[r]^-{\Phi_\mathcal E}&T_{E_1} \ar[r]^-\Phi & T_{E_2} \ar[dl]^-{y^*}\\ && T_k. }}

Hence we have $E_2\cong\mathrm{Pic}^0(E_1)\cong E_1$ by \rrf{cor:bir}.  \end{proof}

\ppp{}  The proof of \rrf{prop:ell} is now reduced to the following

\begin{lemma}  With notations and assumptions as in \rrf{prop:ell}, if $v$ is of rank at least two then there exists another exact equivalence $\Phi': T_{E_1}\rightarrow T_{E_2}$ and object $v'\in T_{E_1}$ such that $\Phi'(v')\cong k(y)[-1]$ and $\mathrm{rank}(v')<\mathrm{rank}(v)$.  \end{lemma}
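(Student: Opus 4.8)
The plan is to lower the rank of $v$ by pre-composing the equivalence $\Phi$ with a suitable autoequivalence of $T_{E_1}$ assembled from Fourier--Mukai transforms, and then to run an induction. First I would record the numerical input. By \rrf{prop:obs} the hypothesis $\mathrm{rank}(v)\geq 2$ forces $v$ to be a shift $V[j]$ of a vector bundle $V$ on $E_1$ with $\mathrm{Hom}(V,V)=k$; thus $V$ is simple, hence stable, and by Atiyah's classification of bundles on a genus-one curve \cite{atiyah_ellipticvectorbundles} (see also \cite{tu_elliptic}) its rank $r=\mathrm{rank}(v)\geq 2$ and degree $d$ satisfy $\gcd(r,d)=1$. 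Equivalently, the class of $v$ in the numerical Grothendieck group $\mathbb Z^2$, with coordinates $(\mathrm{rank},\deg)$, is $\pm(r,d)$ with $(r,d)$ primitive.

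Next I would use two standard autoequivalences of $T_{E_1}$ and track their action on such classes. Fix a closed $k$-point $z\in E_1$. Tensoring by the degree-one line bundle $\mathcal O_{E_1}(z)$ is an autoequivalence sending a class $(r,d)$ to $(r,d+r)$, since $\deg(V\otimes\mathcal O_{E_1}(z))=\deg V+\mathrm{rank}(V)$. The Poincaré Fourier--Mukai transform $\Phi_{\mathcal P}$ of \rrf{lem:reduction}, viewed as an autoequivalence of $T_{E_1}$ under $\mathrm{Pic}^0(E_1)\cong E_1$, sends $k(m)$ (class $(0,1)$) to a degree-zero line bundle (class $(1,0)$) and $\mathcal O_{E_1}$ (class $(1,0)$) to $k(0)[-1]$ (class $(0,-1)$); hence on classes it acts by $(r,d)\mapsto(d,-r)$. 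The two resulting linear maps generate $SL_2(\mathbb Z)$, so in principle any primitive class can be moved to any other.

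Then I would run the Euclidean algorithm on $(r,d)$ to shrink the rank-coordinate. Since $\gcd(r,d)=1$ and $r\geq 2$, I may choose $q\in\mathbb Z$ with $s:=d-qr\in\{1,\dots,r-1\}$, the residue being nonzero precisely because of coprimality. The autoequivalence $\Theta:=\Phi_{\mathcal P}\circ(-\otimes\mathcal O_{E_1}(-qz))$ then sends $\pm(r,d)$ to $\pm(s,-r)$ with $1\leq s<r$. Because $\Theta$ is an equivalence we have $\mathrm{Hom}(\Theta(v),\Theta(v))\cong\mathrm{Hom}(v,v)=k$, so the argument of \rrf{prop:obs} again shows $\Theta(v)$ is a shift of a vector bundle or of a skyscraper; its class having nonzero rank-coordinate $\pm s$ rules out the torsion case, so it is a shift of a vector bundle of rank exactly $s$. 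Setting $v':=\Theta(v)$ and $\Phi':=\Phi\circ\Theta^{-1}\colon T_{E_1}\to T_{E_2}$ gives an exact equivalence with $\Phi'(v')=\Phi(v)\cong k(y)[-1]$ and $\mathrm{rank}(v')=s<r=\mathrm{rank}(v)$, which is exactly what the lemma asserts; feeding this into the induction and the base case \rrf{lem:reduction} will then finish \rrf{prop:ell}.

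I expect the main obstacle to be the bookkeeping that converts the $SL_2(\mathbb Z)$-computation into a statement about the rank of an honest sheaf. The class of a shifted sheaf $W[j]$ is $(-1)^j$ times $(\mathrm{rank}\,W,\deg W)$ with $\mathrm{rank}\,W\geq 0$, so I must confirm that the sign works out and that $\Theta(v)$ is not a shifted torsion sheaf; this is where $s\neq 0$ is essential, since the nonzero rank-coordinate forces the underlying sheaf to have positive rank $s$ regardless of the shift parity. I would also need to pin down the precise action of $\Phi_{\mathcal P}$ on Chern characters, namely the swap $(r,d)\mapsto(d,-r)$, which I would justify by its effect on the generating classes $[k(m)]$ and $[\mathcal O_{E_1}]$ as above, or by citing the computation of the Fourier--Mukai action on cohomology for abelian varieties in \cite{huybrechts_fm}. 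Once these conventions are fixed the reduction is forced, and the rest is routine.
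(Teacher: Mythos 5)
Your proposal is correct, and at its core it performs the same move as the paper: pre-compose $\Phi$ with (the inverse of) a twist-followed-by-Poincar\'e-transform autoequivalence of $T_{E_1}$, i.e.\ one step of the Euclidean algorithm on $(\mathrm{rank},\deg)$ -- indeed the paper's kernel $\mathcal Q=\mathcal P\otimes\pi_2^*(L^{-1})[-n]$ is essentially your $\Theta$ up to conventions. Where you genuinely differ is in how the output is shown to be a shift of a lower-rank bundle. The paper normalizes $v\cong L\otimes B[n]$ with $0\le d_B<r$, rules out $d_B=0$ via Atiyah's description of degree-zero indecomposables ($\mathrm{End}(M\otimes F_r)\cong k^r$), and then computes the transform sheaf-theoretically: since $0<d_B<r$, Atiyah's Lemma 15 plus Riemann--Roch give $H^1(M\otimes B)=0$ and $h^0(M\otimes B)=d_B$ for every degree-zero line bundle $M$, so $\pi_{1*}(\mathcal P\otimes\pi_2^*B)$ is an honest vector bundle of rank $d_B$. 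You avoid this cohomological computation entirely: you track the numerical class, note that $\Theta$ preserves simplicity so the argument of \rrf{prop:obs} applies verbatim to $\Theta(v)$, and read the rank off the class, with the sign bookkeeping correctly handled since rank is non-negative. That is a clean trade, but it leans on two standard inputs that must be cited with care: Mukai's computation $\Phi_{\mathcal P}(\mathcal O_{E_1})\cong k(0)[-1]$ \cite{huybrechts_fm} (equivalently the action on numerical K-theory, which descends to $\mathbb Z^2$ because equivalences preserve the Euler pairing and hence its radical -- worth saying explicitly), and the genus-one fact that simple implies $\gcd(r,d)=1$. Be careful with the latter: ``simple hence stable'' is special to elliptic curves (it fails for curves of genus at least two) and is itself a consequence of Atiyah's $\dim\mathrm{End}=\gcd$ for indecomposables \cite{atiyah_ellipticvectorbundles}. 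In fact you can avoid invoking coprimality altogether and use only the fact the paper cites: if your residue $s=d-qr$ were zero, then $v\otimes\mathcal O_{E_1}(-qz)$ would be a simple indecomposable bundle of degree zero and rank $r\ge2$, contradicting $\mathrm{End}(M\otimes F_r)\cong k^r$; this would make your proof and the paper's rely on literally the same Atiyah input. What the paper's computation buys in exchange is an explicit description of $v'$ as the family $\pi_{1*}(\mathcal P\otimes\pi_2^*B)$ (a moduli-with-level-structure interpretation via Atiyah's Theorem 10), which your more formal argument does not provide.
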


\begin{proof}  Let $r=\mathrm{rank}(v)\geq 2$.  By tensoring with a suitable line bundle on $E_1$ we can write $v\cong L\otimes B[n],$ where $n\in\mathbb Z$, $L$ is a line bundle, and $B$ an indecomposable vector bundle of rank $r$ and degree $d_B$ satisfying $0\leq d_B<r$.  

We claim that $d_B$ cannot be zero:  Indeed, by \cite[Theorem 5 (ii)]{atiyah_ellipticvectorbundles} every indecomposable vector bundle of degree zero and rank $r$ on $E_1$ is of the form $M\otimes F_r$, where $M$ is a line bundle of degree zero, and $F_r$ is \emph{the} special vector bundle of degree zero and rank $r$; see \cite[Theorem 5 (i)]{atiyah_ellipticvectorbundles}.  Hence if $d_B=0$ then we have \[\mathrm{Hom}(v,v)\cong \mathrm{Hom}(L\otimes M\otimes F_r[n],L\otimes M\otimes F_r[n])\cong\mathrm{Hom}(F_r,F_r)\cong k^r\] by \cite[Corollary 1 to thm 5, and Lemma 17]{atiyah_ellipticvectorbundles}.  This contradicts \rrf{prop:obs} part (a) since $r\geq 2$.


Now consider the Poincar\'e line bundle $\mathcal P$ on $\mathrm{Pic}^0(E_1)\times E_1$; denote by $\pi_i$ the projection onto the $i$-th factor.  Let $\mathcal Q$ be the object $\mathcal P\otimes\pi_2^*(L^{-1})[-n]$ in $T_{\mathrm{Pic}^0(E_1)\times E_1}$.  Then we have an exact equivalence \[T_{\mathrm{Pic}^0(E_1)}\stackrel{\Phi_\mathcal Q}{\longleftarrow} T_{E_1},\]which sends the object $v\cong L\otimes B[n]$ to $\pi_{1*}(\mathcal P\otimes \pi_2^*(B))$ in $T_{\mathrm{Pic}^0(E_1)}$.

We consider the \emph{vector bundle} $\mathcal P\otimes \pi_2^*(B)$ as a family of vector bundles on $E_1$ parametrized by $\mathrm{Pic}^0(E_1)$.  In fact it is the family of vector bundles of rank $r$ and degree $d_B$ on $E_1$ with ``level $r/h$-structure'', where $h=\gcd(r,d_B)$; see \cite[Theorem 10]{atiyah_ellipticvectorbundles}.  

Recall that we have $0<d_B<r$, and so by \cite[Lemma 15]{atiyah_ellipticvectorbundles} and the Rieman-Roch theorem \cite[Lemma 8]{atiyah_ellipticvectorbundles}, we have for every line bundle $M$ of degree zero:\[H^i(M\otimes B)\cong \left\{\begin{array}{ll}k^{d_B} & \mbox{if $i=0$,} \\ 0 & \mbox{if $i\neq 0$.} \end{array}\right.\]

Hence by the Leray spectral sequence \cite[(3.4)]{huybrechts_fm} we see that $v':=\pi_{1*}(\mathcal P\otimes \pi_2^*(B))$ is a vector bundle on $\mathrm{Pic}^0(E_1)$ of rank $d_B<r$, and we can simply take $\Phi'$ to be the composition of a quasi-inverse of $\Phi_\mathcal Q$ followed by $\Phi$:

\centerline{\xymatrix{T_{\mathrm{Pic}^0(E_1)} & T_{E_1} \ar[r]^-{\Phi}_\cong \ar[l]_-{\Phi_\mathcal Q}^-\cong &  T_{E_2} }  } 

Identifying $\mathrm{Pic}^0(E_1)$ with $E_1$ then concludes the proof of the lemma and with it the proof of \rrf{prop:ell}.  \end{proof}

\ppp{par:rmk} \emph{Remarks.}  The same idea above may in fact be used to give a proof of \rrf{thm:curves} in the case when the genus $g$ of $C_1$ is not equal to one.  Here we give a sketch:  Suppose $Y$ is a smooth projective variety and $C$ is a smooth projective curve admitting an exact equivalence \[\Phi: T_C\longrightarrow T_Y.\]

Let $y\in Y$ be a closed $k$-point.  Denote by $v\in T_C$ the shift of a simple coherent sheaf such that $\Phi(v)\cong k(y)[-\dim(Y)]$.  Since Serre functors commute with exact equivalences, we have a commutative diagram 

\centerline{\xymatrix{T_C \ar[r]^-\Phi \ar[d]_-{-\otimes K_C[1]}  & T_Y \ar[d]^-{-\otimes K_Y[\dim(Y)]} \\ T_C \ar[r]^-\Phi & T_Y.}}

Applying the functors in this diagram to the object $v$ gives 

\centerline{\xymatrix{ v \ar@{|->}[r] \ar@{|->}[d]& k(y)[-\dim(Y)] \ar@{|->}[d] \\ v\otimes K_C[1] \ar@{|->}[r] & k(y).}    } 

But this implies that $v[\dim(Y)]$ and $v\otimes K_C[1]$ are sent to isomorphic objects under the equivalence $\Phi$, and this is possible only if these two objects are isomorphic; in particular we have $\dim(Y)=1$.  From this we see that we must have $v\cong v\otimes K_C^{\otimes m}$ for all $m\in \mathbb Z$, but this is possible only if either $v$ is torsion (in which case we conclude $C\cong Y$ by \rrf{cor:bir}) or $K_C$ and $-K_C$ are both not ample (in which case $C$ is of genus one).

\mysubsec{Varieties of maximal Kodaira dimension}

\ppp{}  Consider the following 

\begin{thm}\cite[Theorem 2.3 (2)]{kawamata_dk}  Let $X$ and $Y$ be smooth projective varieties over a field $k$ admitting an exact equivalence $\Phi: T_X\rightarrow T_Y$.  Suppose that the Kodaira dimension of $X$ is equal to $\dim(X)$, then $X$ and $Y$ are $K$-equivalent.
\end{thm}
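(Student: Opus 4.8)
The plan is to extract birationality from the criterion \rrf{cor:bir2}, using only that the Serre functor is intrinsic to the category, and then to upgrade this to $K$-equivalence by comparing canonical bundles along the Fourier--Mukai kernel of $\Phi$. Write $n=\dim X$; since exact equivalences preserve dimension we have $\dim Y=n$ as well. Choose $v\in T_X$ with $\Phi(v)\cong k(y)[-n]$ for a closed $k$-point $y\in Y$. Because Serre functors commute with $\Phi$ (as used in \rrf{par:rmk}), with $S_X=(-)\otimes K_X[n]$ and $S_Y=(-)\otimes K_Y[n]$, and since $k(y)\otimes K_Y^{\otimes m}\cong k(y)$, the identity $\Phi\circ S_X^m\cong S_Y^m\circ\Phi$ evaluated at $v$ reads
\[\Phi(v\otimes K_X^{\otimes m})\cong\bigl(S_Y^m\Phi(v)\bigr)[-mn]\cong k(y)[-n]\cong\Phi(v),\]
so that $v\otimes K_X^{\otimes m}\cong v$ for every $m\in\mathbb Z$.

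The heart of the argument, and the step I expect to be hardest, is to deduce from this invariance that $\mathrm{supp}(v)$ is zero-dimensional. Passing to cohomology sheaves gives $\mathcal H^i(v)\otimes K_X^{\otimes m}\cong\mathcal H^i(v)$, so $K_X^{\otimes m}$ is torsion on every component of $\mathrm{supp}(v)$. Here maximal Kodaira dimension is indispensable. Writing $\Phi\cong\Phi_{\mathcal E}$ as in the proof of \rrf{thm:bir}, the objects $v=\Phi^{-1}(k(y))$ are precisely the fibres of the projection $\mathrm{supp}(\mathcal E)\to Y$, while by \rrf{thm:bir} (via \cite[Lemma 6.4 and Lemma 6.11]{huybrechts_fm}) the projection $\mathrm{supp}(\mathcal E)\to X$ is surjective. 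If the generic fibre over $Y$ were positive-dimensional it would, for general $y$, meet the dense open locus $U\subseteq X$ over which $\phi_{mK_X}$ ($m\gg0$) is an isomorphism onto its image; there $K_X^{\otimes m}$ is the pullback of a very ample bundle and so has positive degree on any curve, contradicting torsion. This forces the generic fibre to be a point, i.e.\ $\mathrm{supp}(v)$ is zero-dimensional --- and this is exactly the place where the hypothesis $\kappa(X)=\dim X$ genuinely enters, and without which (for instance for abelian varieties, where $\kappa=0$) the conclusion fails. Granting zero-dimensional support, \rrf{cor:bir2} yields that $X$ and $Y$ are birational, which already recovers the birational consequence.

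Finally I would promote birationality to $K$-equivalence. Comparing the two kernels of $\Phi\circ S_X\cong S_Y\circ\Phi$ and invoking uniqueness of Fourier--Mukai kernels gives the relation
\[\mathcal E\otimes\pi_X^*K_X\cong\mathcal E\otimes\pi_Y^*K_Y\]
on $X\times Y$. By the support analysis already used, the previous step identifies the unique component $Z_0\subseteq\mathrm{supp}(\mathcal E)$ on which both $\pi_X$ and $\pi_Y$ are birational, namely the graph of the birational map $X\dashrightarrow Y$. Restricting the displayed relation to $Z_0$ yields $\pi_X^*K_X|_{Z_0}\cong\pi_Y^*K_Y|_{Z_0}$, and pulling back along a common resolution $W\to Z_0$ with birational morphisms $f\colon W\to X$ and $g\colon W\to Y$ produces $f^*K_X\cong g^*K_Y$, which is exactly the assertion that $X$ and $Y$ are $K$-equivalent. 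The delicate point in this last step is that the isomorphism of canonical pullbacks must survive the passage to $Z_0$ and to $W$ without spurious torsion contributions; this is precisely what the birationality of both projections on $Z_0$, established above, guarantees.
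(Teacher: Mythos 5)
First, a point of orientation: the paper does not actually prove this theorem --- it cites it as Kawamata's and proves only the weaker consequence \rrf{cor:DK}, that $X$ and $Y$ are birational. So the only part of your proposal that can be compared with the paper is your first half, and there your argument is essentially the paper's proof of \rrf{prop:maxkod}: the Serre-functor trick giving $v\otimes K_X^{\otimes m}\cong v$, the deduction that $K_X^{\otimes m}$ is torsion on the support, and the contradiction with $\kappa(X)=\dim X$. Two remarks on that half. The assertion ``so $K_X^{\otimes m}$ is torsion on every component of $\mathrm{supp}(v)$'' is not free: it is exactly the content of the paper's \rrf{lem:fixedsheaf} (pass to a smooth variety mapping into the support, take the reflexive hull, take the top wedge power), and note that even then one only gets \emph{torsion}, because the wedge-power argument kills only $M^{\otimes r}$. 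Also, your coverage claim --- that fibres over \emph{general} $y$ must meet the good pluricanonical locus $U$ --- does not follow from surjectivity of $\mathrm{supp}(\mathcal E)\to X$ alone, since that surjectivity could a priori be achieved by components of $\mathrm{supp}(\mathcal E)$ lying over a proper closed subset of $Y$; the paper's \rrf{lem:support} (spanning classes: \emph{every} $x$ lies in \emph{some} $Z_y$) is the clean way around this, and with it the contradiction goes through as in \rrf{prop:maxkod}.

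The genuine gap is in your second half, which is the actual content of the theorem beyond what the paper proves. The kernel relation $\mathcal E\otimes\pi_X^*K_X\cong\mathcal E\otimes\pi_Y^*K_Y$ is correct and standard, but you cannot ``cancel $\mathcal E$'': restricting this isomorphism of complexes to $Z_0$ does not yield $\pi_X^*K_X|_{Z_0}\cong\pi_Y^*K_Y|_{Z_0}$. What the available technique gives (pull the complex back to a resolution $W\to Z_0$, take cohomology sheaves, apply the \rrf{lem:fixedsheaf} argument) is only that $\bigl(f^*K_X\otimes g^*K_Y^{-1}\bigr)^{\otimes r}$ is trivial for some $r\geq 1$, i.e.\ that $f^*K_X-g^*K_Y$ is \emph{torsion} in $\mathrm{Pic}(W)$. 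Even in the best case, where the relevant cohomology sheaf of the pulled-back kernel has generic rank one along $Z_0$, you would get triviality only on an open subset of $W$ whose complement can contain divisors (the exceptional locus of $W\to Z_0$ and the locus over which $Z_0\to Y$ fails to be an isomorphism), and a line bundle trivial off a divisor need not be trivial --- so birationality of the two projections does \emph{not}, contrary to your closing sentence, guarantee the absence of ``spurious torsion contributions.'' Passing from torsion (equivalently, numerical K-equivalence) to genuine K-equivalence is the real crux of Kawamata's theorem, and it requires input you never invoke: write $K_W=f^*K_X+E_f=g^*K_Y+E_g$ with $E_f,E_g$ effective exceptional divisors, note $E_g-E_f\equiv 0$ since it is torsion, and apply the negativity lemma in both directions to conclude $E_f=E_g$, whence $f^*K_X\cong g^*K_Y$. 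Without this (or an equivalent mechanism), your argument establishes birationality plus torsion-difference of canonical pullbacks, but not the stated K-equivalence.
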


Here by $K$-equivalence we mean there is another projective smooth variety $Z$ with birational maps $f:Z\rightarrow X$ and $g:Z\rightarrow Y$ so that $f^*K_X\cong g^*K_Y$ on $Z$.  It seems unclear at the moment and an interesting question how to interpret $K$-equivalence in terms of the derived categories.

\begin{cor}\label{cor:DK}  Let $X$ and $Y$ be smooth projective varieties over a field $k$ admitting an exact equivalence $\Phi: T_X\rightarrow T_Y$.  Suppose that the Kodaira dimension of $X$ is equal to $\dim(X)$, then $X$ and $Y$ are birational.
\end{cor}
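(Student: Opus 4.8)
The plan is to run the Serre-functor argument of \rrf{par:rmk} in arbitrary dimension and feed its output into the zero-dimensional-support criterion \rrf{cor:bir2}. For each closed $k$-point $y\in Y$ set $v_y:=\Phi^{-1}(k(y))$. Since Serre functors commute with exact equivalences and the Serre functor on $T_Y$ is $-\otimes\omega_Y[\dim Y]$, the skyscraper identity $S_Y(k(y))\cong k(y)[\dim Y]$ transports across $\Phi$ to $v_y\otimes\omega_X[\dim X]\cong v_y[\dim Y]$, that is $v_y\otimes\omega_X\cong v_y[\dim Y-\dim X]$. Tensoring with the invertible sheaf $\omega_X$ preserves (non)vanishing of the cohomology sheaves, so iterating this isomorphism would produce unboundedly many nonzero cohomology sheaves unless $\dim X=\dim Y$; boundedness of $v_y$ therefore forces $\dim X=\dim Y=:n$ and leaves $v_y\otimes\omega_X\cong v_y$. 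Because $\Phi$ is fully faithful we also have $\mathrm{End}(v_y)\cong\mathrm{End}(k(y))\cong k$, so $v_y$ is a simple object fixed by $-\otimes\omega_X$.

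Next I would bring in general type through the pluri-canonical maps, as announced in the introduction. As $\kappa(X)=\dim X$, for $m\gg0$ the rational map $\phi=\phi_{|mK_X|}$ is birational onto its image; let $U\subseteq X$ be the dense open locus on which $\phi$ is an isomorphism onto its image. The crucial observation is that simplicity rigidifies $\mathrm{supp}(v_y)$ over $U$. Given two distinct points $z_1\neq z_2$ of $\mathrm{supp}(v_y)\cap U$, choose $\sigma\in H^0(\omega_X^{\otimes m})$ with $\sigma(z_1)\neq0$ and $\sigma(z_2)=0$, which is possible since $\phi$ separates points of $U$. Composing multiplication by $\sigma$ with the fixed isomorphism $v_y\otimes\omega_X^{\otimes m}\cong v_y$ produces an endomorphism of $v_y$, necessarily a scalar $\lambda$ as $\mathrm{End}(v_y)=k$. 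Applying the derived restriction $(-)\otimes^{L}k(z_2)$ forces $\lambda=0$ (multiplication by $\sigma(z_2)=0$), while $(-)\otimes^{L}k(z_1)$ forces $\lambda\neq0$ (multiplication by $\sigma(z_1)\neq0$ followed by an isomorphism, on the nonzero object $v_y\otimes^{L}k(z_1)$). This contradiction shows $\mathrm{supp}(v_y)\cap U$ contains at most one point.

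I would then locate a single $y$ for which that one point carries the entire support. The skyscrapers $\{k(y)\}$ form a spanning class of $T_Y$, hence $\{v_y\}$ spans $T_X$ and $\bigcup_y\mathrm{supp}(v_y)=X$; since $U$ is dense, some $v_y$ has $\mathrm{supp}(v_y)\cap U=\{x\}$ with $x\in U$. Every positive-dimensional part of $\mathrm{supp}(v_y)$ lies in the closed set $X\setminus U$, which is disjoint from the open point $x$, so $x$ is isolated in $\mathrm{supp}(v_y)$. Splitting off the summand of $v_y$ supported at the isolated point $x$ and invoking $\mathrm{End}(v_y)=k$ once more kills the complementary summand, so $\mathrm{supp}(v_y)=\{x\}$ is zero-dimensional. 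As $\Phi(v_y)\cong k(y)$, the hypotheses of \rrf{cor:bir2} are met and $X$ and $Y$ are birational.

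The main obstacle is precisely that $\omega_X$ is only big and not ample. Were $K_X$ ample, the constancy of $\chi(v_y\otimes\omega_X^{\otimes m})=\chi(v_y)$ would immediately force $\mathrm{supp}(v_y)$ to be zero-dimensional, since the left-hand side is a polynomial in $m$ of degree equal to $\dim\mathrm{supp}(v_y)$ with positive leading coefficient; but bigness lets the pluri-canonical map contract subvarieties on which $v_y$ could a priori be supported. The mechanism that circumvents this is the interplay of (i) point-separation read off from $\mathrm{End}(v_y)=k$, valid only over the good locus $U$, (ii) the spanning-class covering $\bigcup_y\mathrm{supp}(v_y)=X$ guaranteeing some $v_y$ that meets $U$, and (iii) the splitting of $v_y$ at an isolated support point, again powered by simplicity. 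I expect the two points needing the most care to be the verification that global sections of $\omega_X^{\otimes m}$ genuinely separate points of $U$, and the fact that an object of $D^b(X)$ with an isolated support point splits off the corresponding summand.
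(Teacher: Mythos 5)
Your proof is correct, and it shares the paper's overall skeleton: both use the Serre functor identity on skyscrapers to force $\dim X=\dim Y$ and $v_y\otimes\omega_X^{\otimes m}\cong v_y$ for all $m$, both use the spanning class $\{k(y)\}$ to see that $\bigcup_y\mathrm{supp}(v_y)=X$, and both terminate in \rrf{cor:bir2}. But the crucial middle step --- producing one $y$ with $\dim\mathrm{supp}(v_y)=0$ --- is handled by a genuinely different mechanism. The paper (Proposition \rrf{prop:maxkod}) argues by contradiction: assuming every $Z_y=\mathrm{supp}(v_y)$ is positive dimensional, it restricts the isomorphism $v_y\cong v_y\otimes K_X^{\otimes m}$ along a curve $h:Z\rightarrow Z_y$, and invokes an auxiliary lemma (\rrf{lem:fixedsheaf}, proved via reflexive hulls and top wedge powers) to conclude that $h^*(K_X^{\otimes m})$ is torsion in $\mathrm{Pic}(Z)$; hence every general point lies in a positive-dimensional fibre of $\phi_{K_X^{\otimes m}}$, contradicting $\kappa(X)=\dim X$. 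You instead exploit simplicity directly: a pluricanonical section $\sigma$ vanishing at $z_2$ but not at $z_1$, composed with the fixed isomorphism $v_y\otimes\omega_X^{\otimes m}\cong v_y$, yields an endomorphism of $v_y$ which must be a scalar, and derived restriction to $z_1$ and $z_2$ makes that scalar simultaneously nonzero and zero. This replaces the reflexive-sheaf/determinant lemma by a short linear-algebra trick, and it localizes the argument: rather than contradicting maximality of the Kodaira dimension globally, you find one specific $y$ whose support meets the good locus $U$ in a single isolated point and then use indecomposability to kill the rest. What the paper's route buys is uniformity in $y$ and, as a by-product, the sharper remark closing the paper, namely $\dim(X)-\kappa(X,\pm K_X)\geq\dim\mathrm{supp}(v)$; what yours buys is economy, since both \rrf{lem:fixedsheaf} and the curve-restriction step disappear.

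Two points need tightening. First, your separation step is airtight only when the two points are $k$-rational (e.g. $k$ algebraically closed): for distinct closed points with nontrivial residue extensions there may be no $k$-hyperplane through $\phi(z_2)$ at all --- no $\mathbb R$-rational hyperplane of $\mathbb P^1_{\mathbb R}$ passes through the closed point $\{\pm i\}$ --- and the corollary is stated over an arbitrary field. This is fixable within your argument: replace $m$ by a large multiple $md$ and take $\sigma=F(s_0,\dots,s_N)$ for a degree-$d$ form $F$ over $k$ vanishing at $\phi_m(z_2)$ but not at $\phi_m(z_1)$ (such $F$ exists for $d\gg 0$ by global generation of the twisted ideal sheaf); note that $F(s_0,\dots,s_N)$ is an honest global section of $\omega_X^{\otimes md}$, so no extension across the base locus is needed, and the fixed isomorphism $v_y\otimes\omega_X^{\otimes md}\cong v_y$ is still available. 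Second, the splitting-at-an-isolated-support-point fact you flag as delicate is exactly \cite[Lemma 3.9]{huybrechts_fm} (an object whose support is a disjoint union of closed subsets decomposes accordingly), which is the same statement the paper cites to get connectedness of $Z_y$; you can cite it rather than reprove it.
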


In the remaining paragraphs of this section we give a different proof of \rrf{cor:DK}, and we work under its notations and assumptions.  An analogous arguments works if we replace the Kodaira dimension of $X$ in the statement with the Kodaira dimension of the anti-canonical bundle $-K_X$.

\ppp{}  Let $y\in Y$ be a closed $k$-point, and $v_y\in T_X$ be an object such that $\Phi(v_y)\cong k(y)[-\dim(Y)]$.  Then as in \rrf{prop:obs} we know that $\mathrm{Hom}(v_y,v_y)\cong k$.  By \cite[Lemma 3.9]{huybrechts_fm} we know that the cohomology support $Z_y:=\mathrm{supp}(v_y)$ is connected.

\begin{lemma}\label{lem:support}  For every point $x\in X$, there is a point $y\in Y$ such that $x$ lies in$Z_y$.
\end{lemma}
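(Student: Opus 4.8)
The plan is to show that the objects $v_y$, as $y$ ranges over the closed $k$-points of $Y$, have cohomology supports $Z_y$ that together cover all of $X$. The natural approach is to relate the family $\{v_y\}_y$ to a single object on the product $X \times Y$ via the Fourier-Mukai description of $\Phi$, and then use a support/surjectivity argument analogous to the one in the proof of \rrf{thm:bir}. First I would invoke \cite[Theorem 3.2.2]{orlov_dercat} to write $\Phi \cong \Phi_{\mathcal E}$ for a kernel $\mathcal E \in T_{X \times Y}$, and observe that $v_y$ is (up to the shift) the ``other-side'' restriction, i.e. $v_y \cong \mathcal E|_{X \times \{y\}}[\dim Y]$ computed via the quasi-inverse $\Phi^{-1}$, so that $Z_y = \mathrm{supp}(\mathcal E) \cap (X \times \{y\})$ under the identification $X \times \{y\} \cong X$. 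Concretely, since $\Phi^{-1}$ sends $k(y)[-\dim Y]$ to $v_y$ and $\Phi^{-1}$ is again a Fourier-Mukai transform with kernel $\mathcal E'$ (the derived dual of $\mathcal E$ up to twist and shift), the same computation as in \rrf{thm:bir} using \cite[Lemma 1.3]{bo_sod} identifies $v_y$ with the appropriate restriction of $\mathcal E'$ to $\{y\} \times X$.

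The key geometric input is then the behaviour of $\mathrm{supp}(\mathcal E') \subseteq Y \times X$ under the two projections. By \cite[Lemma 6.4 and Lemma 6.11]{huybrechts_fm} the projection $\mathrm{supp}(\mathcal E') \to X$ is surjective onto $X$ (this is precisely where the equivalence property is used: an equivalence cannot ``miss'' any closed point of $X$, for otherwise $\Phi^{-1}$ would annihilate the structure sheaf $k(x)$ of the missed point, contradicting faithfulness). Combining this surjectivity with \cite[Lemma 3.3(b)]{thomason_class}, for any given $x \in X$ there exists $y \in Y$ with $(y,x) \in \mathrm{supp}(\mathcal E')$, which translates into $x \in \mathrm{supp}(v_y) = Z_y$. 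This delivers exactly the covering statement of the lemma.

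The main obstacle I anticipate is keeping careful track of which of the two projections one needs surjective, and on which product the relevant kernel lives. The cited surjectivity lemmas in \cite{huybrechts_fm} are phrased for a fixed kernel and one distinguished projection, so I would need to be explicit about passing from $\Phi$ to $\Phi^{-1}$ (equivalently, from $\mathcal E$ on $X \times Y$ to $\mathcal E'$ on $Y \times X$) so that the projection onto $X$ is the one governed by those lemmas. A subtler point is that surjectivity of $\mathrm{supp}(\mathcal E') \to X$ really requires the functor to be an equivalence, not merely fully faithful in one direction: the cleanest way to see it is that $\{k(x)\}_{x \in X}$ is a spanning class, so if some $x \in X$ lay in no $Z_y$ then $\Phi^{-1}(k(y)[-\dim Y])$ would be supported away from $x$ for every $y$, forcing $\mathrm{Hom}(k(x), v_y[\ast]) = 0$ for all $y$ and hence, after applying the equivalence, $k(x)$ would be orthogonal to the spanning class $\{k(y)\}$, which is impossible. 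I would use this spanning-class formulation as a fallback in case the direct citation of the support lemmas does not literally give surjectivity onto $X$.
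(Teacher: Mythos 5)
Your proposal is correct, and in fact it contains two arguments: your "fallback" in the last paragraph is essentially the paper's own proof, while your primary route is genuinely different. The paper's proof is three lines and uses no kernels at all: by \cite[Proposition 3.17]{huybrechts_fm} the objects $k(y)[-\dim(Y)]$ form a spanning class of $T_Y$; since $\Phi$ is an equivalence the objects $v_y$ form a spanning class of $T_X$; applying the spanning property to the nonzero object $k(x)[-\dim(X)]$ produces a $y$ with $x^*(v_y)\cong \mathrm{Hom}^\bullet(k(x)[-\dim(X)],v_y)\neq 0$, i.e.\ $x\in Z_y$. Your primary route instead invokes Orlov representability \cite[Theorem 3.2.2]{orlov_dercat}, passes to the kernel $\mathcal E'$ of the quasi-inverse, identifies $Z_y$ with the fibre $\mathrm{supp}(\mathcal E')\cap(\{y\}\times X)$ via \cite[Lemma 3.3(b)]{thomason_class}, and quotes surjectivity of $\mathrm{supp}(\mathcal E')\rightarrow X$ from \cite[Lemma 6.4]{huybrechts_fm}; this is valid and mirrors how the paper argues in \rrf{thm:bir}, and it buys a stronger global statement (it identifies $\bigcup_y Z_y$ as the image of the kernel-support projection, a closed set), at the cost of substantially heavier machinery than the spanning-class argument needs. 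Two small imprecisions in your kernel route, neither fatal: first, $v_y$ is not $\mathcal E|_{X\times\{y\}}[\dim Y]$ but rather the restriction of the dual kernel $\mathcal E'\cong \mathcal E^\vee\otimes(\text{twist})[\text{shift}]$, as you yourself correct a sentence later; second, if some $x$ were missed by the projection, the functor that would annihilate $k(x)$ is $\Phi$ itself (since $\Phi(k(x))\cong \mathcal E|_{\{x\}\times Y}$ up to shift), not $\Phi^{-1}$. Both slips are harmless because $\mathrm{supp}(\mathcal E^\vee)=\mathrm{supp}(\mathcal E)$, so all the supports you manipulate agree.
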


\begin{proof}  By \cite[Prpposition 3.17]{huybrechts_fm} we know that $\{k(y)[-\dim(Y)]\}$ (with $y$ varying) form an spanning class of $T_Y$.  Since $\Phi$ is an equivalence we see that $\{v_y\}$ form an spanning class of $T_X$.  In particular for any $x$ we can find an $y$ so that \[x^*(v_y)\cong\mathrm{Hom}^\bullet(k(x)[-\dim(X)],v_y)\neq 0\in T_{k(x)}.\]  \end{proof}

\ppp{}  By \rrf{cor:bir2}, the proof of \rrf{cor:DK} is reduced to the following

\begin{prop}\label{prop:maxkod}  If the Kodaira dimension of $X$ is equal to $\dim(X)$ then there is a point $y\in Y$ with $\dim(Z_y)=0$.  \end{prop}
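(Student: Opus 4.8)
The plan is to feed the relation $\Phi(v_y)\cong k(y)[-\dim Y]$ through the Serre functors and then exploit the simplicity of $v_y$ recorded above, namely $\mathrm{Hom}(v_y,v_y)\cong k$. Since an exact equivalence commutes with Serre functors, and since for a smooth projective variety the Serre functor is $S=(-)\otimes K[\dim]$, applying $S_Y$ to $\Phi(v_y)$ returns $S_Y(k(y)[-\dim Y])\cong k(y)\otimes K_Y\cong k(y)$, because $k(y)$ is a skyscraper. Hence $\Phi(S_X(v_y))\cong k(y)\cong\Phi(v_y[\dim Y])$, and since D-equivalent smooth projective varieties have equal dimension this gives $v_y\otimes K_X\cong v_y$; tensoring with powers of the invertible sheaf $K_X$ then yields $v_y\otimes K_X^{\otimes m}\cong v_y$ for every $m\in\mathbb Z$. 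This is the higher-dimensional analogue of the degree obstruction used for curves in \rrf{par:rmk}.

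The crux is to convert this periodicity into a statement about the pluricanonical map. Fix $m\geq1$ and a section $s\in H^0(X,K_X^{\otimes m})=\mathrm{Hom}(\mathcal O_X,K_X^{\otimes m})$. Tensoring $v_y$ by $s$ and composing with the chosen isomorphism $v_y\otimes K_X^{\otimes m}\xrightarrow{\ \sim\ }v_y$ produces an endomorphism of $v_y$; since $\mathrm{Hom}(v_y,v_y)\cong k$ this endomorphism is a scalar $\lambda(s)$, and $s\mapsto\lambda(s)$ is $k$-linear. When $\lambda(s)=0$ the map $\mathrm{id}_{v_y}\otimes s\colon v_y\to v_y\otimes K_X^{\otimes m}$ is itself zero, which forces $s$ to vanish on the cohomology support $Z_y=\mathrm{supp}(v_y)$. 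Thus $\ker\lambda$ consists exactly of the sections vanishing on $Z_y$ and has codimension at most one, so the restriction map $H^0(X,K_X^{\otimes m})\to H^0(Z_y,K_X^{\otimes m}|_{Z_y})$ has rank at most one. All pluricanonical sections are therefore proportional along $Z_y$, which means $\phi_{|mK_X|}$ is constant on $Z_y$ away from its base locus.

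Now the hypothesis $\kappa(X)=\dim X$ enters. For $m\gg0$ the map $\phi_{|mK_X|}$ is generically finite onto its image, so the locus $B\subseteq X$ where its fibres are positive-dimensional, together with the base locus, is a proper closed subset. By the previous paragraph every $Z_y$ is contained in a single fibre of $\phi_{|mK_X|}$; hence if $\dim Z_y\geq1$ then $Z_y$ lies in a positive-dimensional fibre and therefore $Z_y\subseteq B$. Were this the case for all $y$ we would get $\bigcup_y Z_y\subseteq B\subsetneq X$, contradicting \rrf{lem:support}, which asserts that the $Z_y$ cover $X$. Hence some $Z_y$ is zero-dimensional, which is \rrf{prop:maxkod} and, through \rrf{cor:bir2}, completes \rrf{cor:DK}.

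The step I would treat most carefully is the passage from $\lambda(s)=0$ to the vanishing of $s$ on $Z_y$, and from rank at most one to the contraction of all of $Z_y$: since $v_y$ is a priori a complex with possibly non-reduced or reducible support, one must check that $\mathrm{id}_{v_y}\otimes s=0$ really detects vanishing of $s$ on each component of $Z_y$, which it suffices to test on a cohomology sheaf of $v_y$ whose support is top-dimensional, where multiplication by a section non-vanishing on a component is non-zero. The other delicate point is purely geometric: the existence of an $m$ for which the positive-dimensional-fibre locus of $\phi_{|mK_X|}$ is a proper closed subset, which is exactly where maximal Kodaira dimension is used and where one invokes the standard behaviour of pluricanonical systems (in characteristic zero). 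Pleasantly, the simplicity $\mathrm{Hom}(v_y,v_y)\cong k$ lets the whole connected support $Z_y$ be contracted \emph{at once}, so no separate bookkeeping for reducible or mixed-dimensional supports is required.
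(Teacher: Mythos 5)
Your proof is correct, but the mechanism by which you contract $Z_y$ is genuinely different from the paper's. Both arguments share the same skeleton: commuting $\Phi$ with the Serre functors gives $v_y\cong v_y\otimes K_X^{\otimes m}$ for all $m$, and at the end the contraction of the positive-dimensional $Z_y$'s by pluricanonical maps is played off against the covering statement \rrf{lem:support} and the generic finiteness of $\phi_{|mK_X|}$ coming from $\kappa(X)=\dim(X)$. The difference is the middle step. The paper localizes onto curves: it chooses a non-constant morphism $h:Z\rightarrow Z_y$ through a given point, passes to the cohomology sheaves of $h^*(v_y)$, and invokes \rrf{lem:fixedsheaf} (the reflexive-hull/wedge-power lemma) to conclude that $h^*(K_X^{\otimes m})$ is torsion in $\mathrm{Pic}(Z)$, so each such curve is contracted. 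You instead use the simplicity $\mathrm{Hom}(v_y,v_y)\cong k$ directly --- the paper's proof uses it only through the connectedness of $Z_y$ --- to produce the linear functional $\lambda$ on $H^0(X,K_X^{\otimes m})$ and to show that sections in $\ker\lambda$ vanish on $Z_y$, whence restriction to $Z_y$ has rank at most one and the whole of $Z_y$ is contracted in one stroke. This buys you a shorter proof with no auxiliary sheaf lemma, no reflexive hulls, no cohomology sheaves, and no choice of curves; what the paper's route buys is a reusable geometric lemma and an argument that never manipulates morphisms in the derived category beyond the initial isomorphism. For your key vanishing step there is a cleaner argument than your cohomology-sheaf test: on the open set where $s\neq 0$ the section is an isomorphism $\mathcal O_X\rightarrow K_X^{\otimes m}$, so $\mathrm{id}_{v_y}\otimes s$ is an isomorphism there, and if it is the zero map then $v_y$ must vanish on that open set. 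Three small corrections: your claim that $\ker\lambda$ consists \emph{exactly} of the sections vanishing on $Z_y$ is unproved and unneeded (only the inclusion $\ker\lambda\subseteq\{s:s|_{Z_y}=0\}$ is used); the hedge about characteristic zero is unnecessary, since $\kappa(X)=\dim(X)$ means by definition that some $\phi_{|mK_X|}$ has image of dimension $\dim(X)$, in any characteristic, which matches the characteristic-free statement of \rrf{prop:maxkod}; and your closing remark that no connectedness bookkeeping is required is slightly optimistic --- if $Z_y$ were disconnected, with a positive-dimensional component inside the base locus and an isolated point outside it, your fibre-dimension argument would not place that isolated point in $B$ --- it is the connectedness of $Z_y$ (recorded in the paper just before \rrf{lem:support}, and itself a consequence of $\mathrm{Hom}(v_y,v_y)\cong k$) that excludes this, playing the same role it plays in the paper's choice of a curve through a given point.
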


First we need a simple

\begin{lemma}\label{lem:fixedsheaf}  Let $Z$ be a smooth projective variety over a field $k$, $M$ a line bundle on $Z$ and $\mathcal G$ a coherent sheaf with $\mathrm{supp}(\mathcal G)=Z$ such that $\mathcal G\cong \mathcal G\otimes M$.  Then $M$ is torsion in $\mathrm{Pic}(Z)$; that is, $\mathcal O_Z\cong M^{\otimes r}$ for some $r\in\mathbb N$.  \end{lemma}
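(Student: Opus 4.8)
The plan is to extract from the isomorphism $\mathcal G\cong \mathcal G\otimes M$ a numerical relation by passing to determinant line bundles. Since $Z$ is smooth, every coherent sheaf admits a finite resolution by locally free sheaves, so the determinant $\det(\mathcal G)\in \mathrm{Pic}(Z)$ is well-defined (independent of the resolution) and multiplicative in short exact sequences; this is the only structural input I need.

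First I would record that the generic rank $r:=\mathrm{rank}(\mathcal G)$ is at least one. Indeed, because $Z$ is a variety it has a unique generic point $\eta$, and the hypothesis $\mathrm{supp}(\mathcal G)=Z$ forces $\eta\in\mathrm{supp}(\mathcal G)$, i.e. the stalk $\mathcal G_\eta$ is a non-zero vector space over the function field $k(Z)$; its dimension is precisely $r\geq 1$. Next I would establish the key formula: for any line bundle $M$ one has $\det(\mathcal G\otimes M)\cong \det(\mathcal G)\otimes M^{\otimes r}$. I would prove this by choosing a finite locally free resolution $E_\bullet\to \mathcal G$; tensoring by $M$ gives a finite locally free resolution of $\mathcal G\otimes M$, and taking the alternating tensor product of determinants reduces the claim to the elementary identity $\det(E_i\otimes M)\cong \det(E_i)\otimes M^{\otimes \mathrm{rank}(E_i)}$ for each locally free $E_i$. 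The exponent that survives is $\sum_i(-1)^i\mathrm{rank}(E_i)$, which equals $r$ because rank is additive along the resolution (evaluate at $\eta$) and agrees there with the generic rank of $\mathcal G$.

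Finally, applying $\det(-)$ to the given isomorphism $\mathcal G\cong \mathcal G\otimes M$ yields $\det(\mathcal G)\cong \det(\mathcal G)\otimes M^{\otimes r}$ in $\mathrm{Pic}(Z)$, and cancelling $\det(\mathcal G)$ gives $\mathcal O_Z\cong M^{\otimes r}$ with $r\geq 1$, which is exactly the assertion that $M$ is torsion.

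The main obstacle is the determinant formula for a sheaf that need not be locally free: one must know both that $\det$ is defined on all of $Z$ and that the relevant exponent is the generic rank rather than some local rank. Both points are standard consequences of smoothness (finite global homological dimension, so that global finite locally free resolutions exist, together with additivity of rank in exact sequences), but they are where all the content sits. In particular, merely restricting to the open locus $U\subseteq Z$ where $\mathcal G$ is locally free would not suffice, since $Z\setminus U$ may be a divisor and the triviality of $M^{\otimes r}|_U$ need not extend across it; working with the global determinant handles this automatically.
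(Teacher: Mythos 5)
Your proof is correct, but it takes a different technical route from the paper's. You work with the global determinant of $\mathcal G$, defined via a finite locally free resolution (available since $Z$ is smooth and projective), and use the formula $\det(\mathcal G\otimes M)\cong\det(\mathcal G)\otimes M^{\otimes r}$ with $r$ the generic rank; cancelling $\det(\mathcal G)$ then gives the result in one stroke. The paper instead first replaces $\mathcal G$ by its reflexive hull $\mathcal G^{\vee\vee}$ (citing Hartshorne), which still satisfies $\mathcal G^{\vee\vee}\cong\mathcal G^{\vee\vee}\otimes M$; a reflexive sheaf on a smooth variety is locally free off a closed subset $Z'$ of codimension at least two, so taking the top exterior power on $U=Z-Z'$ shows $M^{\otimes r}|_U$ is trivial, and triviality extends across $Z'$ precisely because $\mathrm{codim}(Z')\geq 2$. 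This bears directly on your closing remark: you are right that restricting to the locally free locus of $\mathcal G$ itself is insufficient when the complement is a divisor, but the reflexive hull is exactly the device that repairs this, shrinking the bad locus to codimension at least two, where restriction of $\mathrm{Pic}$ is injective; so the ``naive'' open-locus strategy does succeed after that one extra step. The trade-off between the two arguments is where the standard input is placed: your proof leans on the well-definedness (independence of resolution) and multiplicativity of the determinant for arbitrary coherent sheaves, which is genuine machinery even if citable (e.g.\ Huybrechts--Lehn, Section 1.1), whereas the paper's proof needs only elementary facts about reflexive sheaves and wedge powers of vector bundles. Both proofs use the hypothesis $\mathrm{supp}(\mathcal G)=Z$ in the same way, namely to guarantee that the relevant rank $r$ is positive, so that $M^{\otimes r}\cong\mathcal O_Z$ is a nontrivial conclusion.
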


\begin{proof}  We may replace $\mathcal G$ in the statement with a reflexive sheaf \cite[Corollary 1.4]{hartshorne_reflexive}:  Indeed, $\mathcal G^\vee{}^\vee$ is reflexive and is easily seen to satisfy $\mathcal G^\vee{}^\vee\cong \mathcal G^\vee{}^\vee\otimes M$.  So from now on we assume $\mathcal G$ is reflexive.

Then there is a closed subset $Z'\subset Z$ of codimension at least two such that the restriction of $\mathcal G$ to its complement $U:=Z-Z'$ is a vector bundle of \emph{positive} rank $r$.  By taking the $r$-th wedge power of $\mathcal G|_U\cong \mathcal G\otimes M|_U$ we get \[\bigwedge^r(\mathcal G|_U)\cong \bigwedge^r(\mathcal G|_U)\otimes (M|_U^{\otimes r}).\]

Since $\bigwedge^r(\mathcal G|_U)$ is a line bundle on $U$, this implies that $M|_U^{\otimes r}$ is the trivial line bundle on $U$, which in turn implies that $M^{\otimes r}$ is the trivial line bundle on $Z$ since $Z'$ has codimension at least two.  \end{proof}

\begin{proof}[Proof of \rrf{prop:maxkod}]  Suppose on the contrary that for every $y$ we have $\dim(Z_y)>0$.  Consider the following commutative diagram

\centerline{\xymatrix{T_X \ar[r]^-\Phi \ar[d]_-{-\otimes K_X[\dim(X)]}  & T_Y \ar[d]^-{-\otimes K_Y[\dim(Y)]} \\ T_X \ar[r]^-\Phi & T_Y,}}

where the vertical arrows are Serre functors.  Applying the functors in this diagram to the object $v_y$ gives 

\centerline{\xymatrix{ v_y \ar@{|->}[r] \ar@{|->}[d]& k(y)[-\dim(Y)] \ar@{|->}[d] \\ v_y\otimes K_X[\dim(X)] \ar@{|->}[r] & k(y).}    }

Then $v_y[\dim(Y)]$ and $v\otimes K_X[\dim(X)]$ are sent to isomorphic objects in $T_Y$ under the equivalence $\Phi$, hence these two objects are isomorphic in $T_X$.  Letting $d=\dim(X)-\dim(Y)$ we then have \[v_y\cong v_y\otimes K_X[d]\cong v_y\otimes K_X^{\otimes 2}[2d]\cong \cdots \cong v_y\otimes K_X^{\otimes m}[md]\cong \cdots\]for all $m\in\mathbb Z$.  This in particular implies the well-known fact that $d=0$ \cite[Proposition 4.1]{huybrechts_fm}, and we have $v_y\cong v_y\otimes K_X^{\otimes m}$ for every $m\in\mathbb Z$.

Fix any closed $k$-point $x$ on $X$ and $y$ on $Y$ so that $x\in Z_y$ as in \rrf{lem:support}.  Let $h:Z\rightarrow Z_y$ be a non-constant morphism from a positive dimensional smooth variety $Z$ whose image contains $x$; for example we can choose any projective curve contained in $Z_y$ passing through $x$, then take $Z$ to be its normalization.  This can be done since $Z_y$ is connected and assumed to be of positive dimension.

The isomorphism $v_y\cong v_y\otimes K_X^{\otimes m}$ then gives $h^*(v_y)\cong h^*(v_y)\otimes h^*(K_X^{\otimes m})$ on $Z$.  Since $h^*(K_X^{\otimes m})$ is locally free on $Z$ this implies \[\mathcal H^i(h^*(v_y))\cong \mathcal H^i(h^*(v_y))\otimes h^*(K_X^{\otimes m}),\]for every $i$; here $\mathcal H^i(-)$ denotes the $i$-th cohomology sheaf.  

Since the image of $h$ is contained in the support of $v_y$, we have $\mathrm{supp}(\mathcal H^i(h^*(v_y)))=Z$ for some $i$.  Therefore the conditions of \rrf{lem:fixedsheaf} are satisfied with $\mathcal G=\mathcal H^i(h^*(v_y))$ and $M=h^*(K_X^{\otimes m})$, and we conclude that $h^*(K_X^{\otimes m})$ is a torsion element in $\mathrm{Pic}(Z)$ for every $m$.  

But this implies that every general $x$ is contained in a positive dimensional fibre of the rational map $\phi_{K_X^{\otimes m}}$ associated to $K_X^{\otimes m}$, since $h(Z)$ is mapped to a point.  Therefore the Kodaira dimension of $X$ cannot be equal to $\dim(X)$.  \end{proof}

\ppp{}  \emph{Remark.}  Denote by $\kappa(X,L)$ the Iitaka-Kodaira dimension of a line bundle on a smooth projective variety $X$.  Then the proof of \rrf{prop:maxkod} shows that if $v\in T_X$ is an object that is sent to a shift of $k(y)$ under an exact equivalence $\Phi: T_X\rightarrow T_Y$, then the support of $v$ is contracted by the rational maps $\phi_{\pm K_X}$ associated to the linear systems $|\pm K_X|$.  From this we have \[\dim(X)-\kappa(X,\pm K_X)\geq \dim\mathrm{supp}(v).\]

\bibliographystyle{amsalpha}
\bibliography{mybibli}

 \end{document}